\documentclass{article}
\usepackage{setspace}
\usepackage[left=1.5in,right=1.5in,top=1in,bottom=1in]{geometry}
\usepackage{amsmath,amssymb,amsthm}
\usepackage{mathbbol}
\usepackage{enumerate}
\usepackage{hyperref}
\usepackage{cleveref}
\usepackage{tikz-cd}

\numberwithin{equation}{section}

\theoremstyle{plain}
\newtheorem{theorem}{Theorem}[section]
\newtheorem{proposition}[theorem]{Proposition}
\newtheorem{corollary}[theorem]{Corollary}
\newtheorem{lemma}[theorem]{Lemma}
\theoremstyle{definition}
\newtheorem{definition}[theorem]{Definition}
\newtheorem{remark}[theorem]{Remark}

\newcommand\QQ{\mathbb{Q}}
\newcommand\ZZ{\mathbb{Z}}
\newcommand{\Hom}{\mathrm{Hom}}
\newcommand{\mc}{{\text{-}\mathrm{mC}_R}}
\DeclareMathOperator{\coker}{coker}

\author{Jiahao Hu}
\newcommand{\Addresses}{{%
  \bigskip
  \footnotesize

  \textsc{Yau Mathematical Sciences Center, Tsinghua University,
    Beijing, China 100084}\par\nopagebreak
  Email: \texttt{jiahao.hu.math@gmail.com}
}}
\date{}

\title{Quillen equivalences for truncated multicomplexes}
\begin{document}
\maketitle
\begin{abstract}
We establish Quillen equivalences between successive truncations of multicomplexes for the model structures that detect weak equivalences simultaneously on prescribed pages of the two associated spectral sequences. In particular, the natural comparison from $4$-multicomplexes to bicomplexes, which is relevant to the comparison between almost complex and complex geometry, is a Quillen equivalence. This answers a question of Cirici, Livernet, and Whitehouse.
\end{abstract}
\section{Introduction}

Multicomplexes arise naturally whenever a differential is decomposed into homogeneous components with respect to a grading. The equation that the differential squares to zero becomes precisely the system of quadratic relations defining a multicomplex, while the grading gives rise to an associated spectral sequence. Such decompositions occur in many geometric settings: the complexified de Rham complex of a complex manifold is naturally a bicomplex; $4$-multicomplexes occur in almost complex geometry \cite{cirici2021dolbeault} and, more generally, in the study of almost Dirac and generalized almost complex structures \cite{gualtieri2020deformation,cirici2025model}. Further examples arise in Morse--Bott theory \cite{banyaga2010morse,hurtubise2010multicomplexes} and in sub-Riemannian geometry, where multicomplexes encode the weight decomposition of the de Rham differential on Carnot groups \cite{lerario2023multicomplexes}. These geometric examples motivate the study of multicomplexes and their associated spectral sequences.

The homological and homotopical properties of multicomplexes have consequently been studied by several authors. Livernet, Whitehouse, and Ziegenhagen gave an explicit description of the differentials in the spectral sequence of a multicomplex \cite{livernet2020spectral}. Cirici, Egas Santander, Livernet, and Whitehouse constructed model structures on filtered complexes and bicomplexes in which weak equivalences are detected at a prescribed page of the associated spectral sequence \cite{cirici2020model}. Fu, Guan, Livernet, and Whitehouse subsequently extended these model structures from bicomplexes to multicomplexes and their finite truncations \cite{fu2020model}. More recently, Cirici, Livernet, and Whitehouse introduced model structures on truncated multicomplexes that treat the two naturally associated spectral sequences simultaneously \cite{cirici2025model}. Related model structures on right-half-plane bicomplexes and twisted complexes were constructed by Muro and Roitzheim \cite{muro2019homotopy}, while Livernet and Whitehouse developed a pagewise homotopy theory for spectral sequences themselves \cite{livernet2024homotopy}. Together, these works provide a homotopical framework in which multicomplexes arising from geometry can be compared.

The comparison between $4$-multicomplexes and bicomplexes is of particular geometric significance. On a complex manifold, the bigraded de Rham algebra is a bicomplex. On an almost complex manifold, the de Rham differential has two additional homogeneous components; these are determined by the Nijenhuis tensor and vanish precisely when the almost complex structure is integrable \cite{newlander1957complex}. After regrading and adjusting signs to match our conventions, the bigraded de Rham algebra is therefore a $4$-multicomplex in general and becomes a bicomplex in the integrable case. The passage from $4$-multicomplexes to bicomplexes studied here is thus the linear-algebraic counterpart of the passage from almost complex to complex geometry.

Cirici, Livernet, and Whitehouse formulated this comparison as a quotient adjunction from $4$-multicomplexes to bicomplexes. They proved that it is a Quillen adjunction and asked whether it is a Quillen equivalence \cite[Proposition~4.1.1 and Remark~4.1.2]{cirici2025model}. We answer their question as a special case of a general theorem for successive truncations.

To state this general result, we introduce the following notation. Let $R$ be a commutative ring with unit. For $N\geq2$, let $N\mc$ denote the category of $N$-multicomplexes over $R$; when $N=2$, this is the category of bicomplexes. Every $N$-multicomplex has two associated spectral sequences, denoted by ${}'E$ and ${}''E$. For $r,s\geq0$, let $(N\mc)_{r,s}$ denote the model structure whose weak equivalences are precisely the maps inducing quasi-isomorphisms on ${}'E_r$ and ${}''E_s$. For $N\geq3$, the two extreme structure operators determine two quotient adjunctions between successive truncations, denoted by $\mathbf q_+\dashv\mathbf j_+$ and $\mathbf q_-\dashv\mathbf j_-$. Our main result is the following.

\begin{theorem}
Let $R$ be a commutative unital ring, $N\ge 3$, and $r,s\ge 0$. The adjacent-truncation adjunctions are Quillen equivalences
\[
\begin{aligned}
\mathbf{q}_+:(N\mc)_{r,s+1}&\ \rightleftarrows\ \left((N-1)\mc\right)_{r,s}:\mathbf j_+,\\
\mathbf{q}_-: (N\mc)_{r+1,s}&\ \rightleftarrows\ \left((N-1)\mc\right)_{r,s}:\mathbf j_-.
\end{aligned}
\]
\end{theorem}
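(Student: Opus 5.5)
The plan is to use the standard criterion for Quillen equivalences in the form that is convenient when the right adjoint is a full inclusion. The functor $\mathbf j_\pm$ regards an $(N-1)$-multicomplex as an $N$-multicomplex whose extreme structure operator ($d_{N-1}$ for $\mathbf j_+$, the opposite extreme one for $\mathbf j_-$) vanishes. The functor $\mathbf q_\pm$ divides out the sub-multicomplex generated by the image of that operator. Hence $\mathbf q_\pm\mathbf j_\pm=\mathrm{id}$, and the counit is an isomorphism. By Hovey's criterion (Cor.~1.3.16), it then suffices to prove two things:
\begin{enumerate}[(a)]
\item $\mathbf j_\pm$ reflects weak equivalences between fibrant objects;
\item for every cofibrant $M$ the unit $M\to \mathbf j_\pm\mathbf q_\pm M$ is a weak equivalence.
\end{enumerate}
The two cases are symmetric under exchanging the roles of ${}'E$ and ${}''E$, so I treat $\mathbf q_+\dashv\mathbf j_+$ only.

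\emph{Step 1 (page shift).} The first step is a comparison lemma for an $(N-1)$-multicomplex $X$:
\[
{}'E_r(\mathbf j_+X)\cong {}'E_r(X)
\quad\text{and}\quad
{}''E_{s+1}(\mathbf j_+X)\cong{}''E_s(X),
\]
up to a regrading, naturally in $X$. The idea is that the bidegree conventions for $N$-multicomplexes place the extra operator one step further in the second filtration. When that operator is zero, the second spectral sequence of $\mathbf j_+X$ is a d\'ecalage of that of $X$, whereas the first filtration is untouched. I would prove this from the explicit description of the differentials in \cite{livernet2020spectral}, using $r$-cycles and $r$-boundaries. With this lemma, $\mathbf j_+$ both preserves and reflects the weak equivalences of $(N\mc)_{r,s+1}$ versus $((N-1)\mc)_{r,s}$, which gives (a) (in fact for all objects). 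It also re-proves that $\mathbf j_+$ is right Quillen, given the known description of the fibrations.

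\emph{Step 2 (unit on generators).} Next I verify the unit on the sources and targets of the generating cofibrations of $(N\mc)_{r,s+1}$ from \cite{cirici2025model}. These are the ``disc'' and ``$(r,s+1)$-sphere'' type free multicomplexes on one or two generators. For each of them, $\mathbf q_+$ is explicitly computable. The point to check is that $M\to\mathbf j_+\mathbf q_+M$ induces an isomorphism on ${}'E_{r+1}$ and ${}''E_{s+2}$, or at least a quasi-isomorphism on ${}'E_r$ and ${}''E_{s+1}$. For the second filtration, the extra page is exactly what absorbs the part of the free object generated by $d_{N-1}$: that part is ${}''E_{s+1}$-acyclic because $d_{N-1}$ appears as a differential on the $(s+1)$-st page.

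\emph{Step 3 (cell induction), which I expect to be the main obstacle.} Every cofibrant object is a retract of a transfinite composition of pushouts of generating cofibrations, and $\mathbf q_+$ preserves colimits. I will run induction along the cell filtration. At a pushout along a generating cofibration $A\to B$, the map $M\to\mathbf j_+\mathbf q_+M$ on $M'=M\cup_A B$ sits in a map of short exact sequences, because the generating cofibrations are degreewise split injections and $\mathbf j_+\mathbf q_+$ preserves these cofibres. The obstacle is that quasi-isomorphisms on a page ${}'E_r$ or ${}''E_{s+1}$ of a spectral sequence do not obviously satisfy a five-lemma for short exact sequences of multicomplexes. I would handle this by identifying ${}'E_r$-quasi-isomorphisms with ordinary quasi-isomorphisms of a suitable functor, namely the ``$r$-shifted'' or d\'ecalage complex, or equivalently $Z_r/B_r$ data. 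I would then show that this functor sends split short exact sequences of multicomplexes to long exact sequences, following the argument used in \cite{cirici2020model} to prove left properness. For transfinite compositions, the corresponding statement is that these page-homology functors commute with filtered colimits. Retracts are automatic. Granting these exactness properties, (b) follows, and hence the theorem.
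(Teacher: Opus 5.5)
\textbf{There is a genuine gap in Step~3, at the obstacle you flag yourself.} Because you run the cell induction directly in $(N\mc)_{r,s+1}$, you need a five lemma for isomorphisms on ${}'E_{r+1}$ and ${}''E_{s+2}$. The mechanism you propose for this is false. Pages beyond $E_1$ do not send short exact sequences of multicomplexes to long exact sequences, even when the sequences are split as bigraded modules (d\'ecalage is not exact).

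\emph{Counterexample.} Let $B$ be the bicomplex spanned by $c\in B^{p,q}$, $a=\delta_0c$ and $u=\delta_1c$, with $\delta_0,\delta_1$ vanishing on $a$ and $u$. Let $A=R\cdot a\subset B$ and $C=B/A$. This sequence splits bigradedly, yet:
\begin{itemize}
\item ${}'E_2(B)\cong R\cdot[u]\neq 0$;
\item ${}'E_2(C)=0$, since ${}'d_1[\bar c]=-[\bar u]$;
\item ${}'E_2(A)\to{}'E_2(B)$ is zero, because $a$ is a $\delta_0$-boundary in $B$.
\end{itemize}
So exactness at ${}'E_2(B)$ fails for any choice of connecting maps. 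The culprit is the nonzero connecting map $H(C,\delta_0)\to H(A,\delta_0)$.

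\emph{A second gap.} ``Degreewise split'' does not give what your lower row needs, namely that $\mathbf q_\pm$ of a generating cofibration is still injective. The paper gets this in two steps. First, every cofibration is a monomorphism; this is proved using the cofree module $\underline{\Hom}_R(\mathcal A_N,M)$, which is $\delta_0$-acyclic because $\mathcal A_N$ is free over $R\langle\delta_0\rangle/(\delta_0^2)$. Second, $\mathbf q_\pm$ is left Quillen, so it preserves cofibrations.

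\textbf{How the paper avoids the obstacle.}
\begin{enumerate}
\item \emph{Reduction to the bottom case.} A trivial fibration of $(N\mc)_{1,0}$ is a trivial fibration of $(N\mc)_{r+1,s}$. Hence $(r+1,s)$-cofibrant objects are $(1,0)$-cofibrant. Also, every $(1,0)$-weak equivalence is an $(r+1,s)$-weak equivalence. So it suffices to show that the unit of $\mathbf q_-\dashv\mathbf j_-$ is a $(1,0)$-equivalence on $(1,0)$-cofibrant objects; the case of $\mathbf q_+$ reduces to $(0,1)$ in the same way. In this case the only invariants are $H({}'E_1,{}'d_1)$ and $H(X,\delta_{N-1})$, and the generating cofibrations are just $\iota_2^N$ and $(\iota_1^N)^{\mathfrak o_N}$.
\item \emph{Vanishing of the connecting map for $\iota_2^N(p,q)$-cells.} The connecting map ${}'E_1(Z)\to{}'E_1(X)$ is the universal one followed by ${}'E_1(f)$. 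The universal connecting map vanishes for bidegree reasons, by the computation of ${}'E_1(\mathcal{ZW}_2^N)$. This gives a short exact sequence of $({}'E_1,{}'d_1)$-complexes, so the ordinary five lemma applies. The unit is then checked on the cokernels $\mathcal{ZW}_2^N$ and $(\mathcal{ZW}_1^N)^{\mathfrak o_N}$ using tautological classes.
\end{enumerate}

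\textbf{A smaller point in Step~2.} $\delta_{N-1}$ is the page-zero differential ${}''d_0$, not a differential on the $(s+1)$-st page.
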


The page shifts in the theorem are forced by the elementary comparison of the two spectral sequences: $\mathbf j_+$ preserves the first spectral sequence and raises the page index of the second by one, whereas $\mathbf j_-$ preserves the second and raises the page index of the first by one. Specializing first to $\mathbf q_-$ for $N=4$ and then to $\mathbf q_+$ for $N=3$ yields the composite Quillen equivalence
\[
\mathbf{q}_+\mathbf{q}_-:(4\mc)_{1,1}\ \rightleftarrows\ (2\mc)_{0,0}:\mathbf j_-\mathbf j_+.
\]
Cirici, Livernet, and Whitehouse considered precisely this composite adjunction in \cite{cirici2025model}. The induced derived adjunction yields an equivalence between the corresponding homotopy categories of $4$-multicomplexes and bicomplexes.

The paper is organized as follows. Section~2 recalls the two spectral sequences associated with an $N$-multicomplex, their representing objects, and the model structures whose weak equivalences are detected on prescribed pages. Section~3 constructs the two base-change adjunctions between successive truncations and determines their effects on the representing objects and on both spectral sequences. Section~4 establishes the exactness properties needed to analyze cell attachments, verifies the adjunction unit on the basic cells, and proves the Quillen equivalences by cellular induction.

\section{Model structures}
In this section, we collect the notation and results from \cite{cirici2025model,cirici2020model,fu2020model} that are needed for stating and proving the desired Quillen equivalences. Throughout we work over a fixed commutative ring $R$ with unit.

\begin{definition}[$N$-multicomplex, {cf. \cite[Definition 2.1.1]{cirici2025model}}]
	A multicomplex is a bigraded $R$-module $X=\{X^{p,q}\}_{p,q\in \ZZ}$ equipped with $R$-module maps $\{\delta_i: X\to X\}_{i\ge 0}$ of bidegrees $|\delta_i|=(-i,1-i)$ satisfying that for all $l\ge 0$,
	\[
	\sum_{i+j=l}(-1)^i\delta_i\delta_j=0.
	\]
	For $N\ge 2$, an $N$-multicomplex is a multicomplex on which $\delta_i=0$ for all $i\ge N$. A morphism $f: X\to Y$ between ($N$-)multicomplexes is a degree (0,0) morphism satisfying that $\delta_i f=f\delta_i$ for all $i\ge 0$.
\end{definition}

Multicomplexes also appear under the name \emph{twisted complexes}; their $E_r$-homotopies were studied in \cite{cirici2018derived}.

	The category of $N$-multicomplexes, denoted by $N\mc$, can be viewed as the category of bigraded left modules over the bigraded associative $R$-algebra (\cite[Notation 2.1.2 and Lemma 2.1.4]{cirici2025model})
\[
\mathcal{A}_N=R\langle \delta_0,\dots,\delta_{N-1}\rangle/\left(\sum_{\substack{i+j=l\\0\le i,j<N}}(-1)^i\delta_i\delta_j\right)_{l\ge 0},
\]
where $R\langle \delta_0,\dots,\delta_{N-1}\rangle$ is the free associative $R$-algebra generated by the symbols $\delta_0,\dots,\delta_{N-1}$ of bidegrees $|\delta_i|=(-i,1-i)$.

\begin{remark}\label{contractible}
	Let $\mathcal{D}$ be the subalgebra of $\mathcal{A}_N$ generated by $\delta_0$. Then $\mathcal{D}\cong R\langle\delta_0\rangle/(\delta_0^2)$. We will need the fact (\cite[Lemma 2.1.4]{cirici2025model}) that $\mathcal{A}_N$ is free as a right $\mathcal{D}$-module, and that $\mathcal{D}=R\oplus R\delta_0$ carries a canonical contracting $\delta_0$-homotopy $h$ (i.e. $\delta_0h+h\delta_0=\mathrm{id}_\mathcal{D}$), given by $h(a+b\delta_0)=b$. See also \cite{auyeung2023algebra} for the special case $N=4$ and $R=\QQ$.
\end{remark}

\begin{definition}[witness cycles and boundaries, {cf. \cite[Definition 2.2.1]{cirici2025model}}]
	Let $X$ be an $N$-multicomplex. We define the \textit{witness $r$-cycles} to be the bigraded $R$-modules $ZW_0^{p,q}(X)=X^{p,q}$ and for $r\ge 1$,
	\begin{align*}
		ZW_r^{p,q}(X)=\{(x_0,\dots,x_{r-1})|&\text{$x_j\in X^{p-j, q-j}$ for $0\le j< r$ such that}\\
		&\text{$\sum_{i+j=l}(-1)^i \delta_i x_j=0$ for $0\le l< r$}\}.
	\end{align*}
	Define the \textit{witness $r$-boundaries} to be the bigraded $R$-modules $BW_0^{p,q}(X)=0$, $BW_1^{p,q}(X)=X^{p,q}$ and for $r\ge 2$,
	\[
	BW_r^{p,q}(X)=ZW_{r-1}^{p+r-1,q+r-1}(X)\oplus X^{p,q}\oplus ZW_{r-1}^{p-1,q}(X).
	\]
	Define the bidegree (0,1) map $w_r: BW_r^{p,q-1}(X)\to ZW_r^{p,q}(X)$ by $w_0=0$, $w_1=\delta_0$ and for $r\ge 2$,
	\[
	w_r:\left(b_0,\dots,b_{r-2};a;c_0,\dots,c_{r-2}\right)\mapsto \left(\delta_j a+\sum_{i=1}^{r-1}(-1)^i\delta_{i+j} b_{r-1-i}+c_{j-1}\right)_{0\le j<r}
	\]
	in which $c_{-1}=0$. Finally define $E_r^{p,q}(X)=ZW_r^{p,q}/w_r BW_r^{p,q-1}(X)$.
\end{definition}

From \cite[Lemma 3.18]{fu2020model} the functors $ZW_r^{p,q}(-)$ and $BW_r^{p,q}(-)$ are represented by $N$-multicomplexes $\mathcal{ZW}_r^N(p,q)$, $\mathcal{BW}_r^N(p,q)$ respectively. The map $w_r$ is represented by a morphism of $N$-multicomplexes
\[
\iota_r^N(p,q): \mathcal{ZW}_r^N(p,q)\to \mathcal{BW}_r^N(p,q-1).
\]
For $r\ge 1$ the kernel of $w_r$ is naturally isomorphic to $ZW_r^{p+r-1,q+r-2}(X)$ by \cite[Lemma 3.8]{fu2020model} and therefore we have an exact sequence (\cite[Lemma 3.20]{fu2020model})
\begin{equation}\label{es}
	\mathcal{ZW}_r^N(p,q)\xrightarrow{\iota_r^N(p,q)} \mathcal{BW}_r^N(p,q-1)\to \mathcal{ZW}_r^N(p+r-1, q+r-2)\to 0
\end{equation}

The standard filtration of the total complex of $X$ gives an associated spectral sequence; see, for example, \cite[Section~11]{boardman1999conditionally}. By \cite[Proposition~3.7]{fu2020model}, the quotient defined above is its $r$-th page. After translating the signs and bidegrees to our conventions, the differential $d_r:E_r^{p,q}(X)\to E_r^{p-r,q+1-r}(X)$ is given by \cite[Theorem~2.10]{livernet2020spectral}: $d_0=\delta_0$, and for $r\geq1$,
\begin{equation}\label{page-differential}
	d_r[x_0, \dots, x_{r-1}]=[\sum_{i=1}^r(-1)^i\delta_{i+j}x_{r-i}]_{0\le j<r}.
\end{equation}
In general, the page differential $d_r$ is not induced by the single structure map $\delta_r$; explicit examples illustrating this distinction were given by Hurtubise \cite{hurtubise2010multicomplexes}.

The involution $\mathfrak{o}_N:\ZZ^2\to\ZZ^2$ given by
\[
\mathfrak{o}_N(p,q)=\left((N-2)p-(N-1)q, (N-3)p-(N-2)q\right)
\]
yields an involution of $N\mc$ given on objects by $(X^{\mathfrak{o}_N})^{p,q}=X^{\mathfrak{o}_N(p,q)}$, $\delta_i^{\mathfrak{o}_N}=\delta_{N-1-i}$ and on morphisms by $(f^{\mathfrak{o}_N})^{p,q}=f^{\mathfrak{o}_N(p,q)}$. See \cite[Notation 2.1.5 and Lemma 2.1.6]{cirici2025model}. Consequently we obtain another spectral sequence $E_r(X^{\mathfrak{o}_N})$ associated to an $N$-multicomplex $X$.

Henceforth, we denote the spectral sequences $E_r(X)$, $E_r(X^{\mathfrak{o}_N})$ by $'E_r(X)$, $''E_r(X)$ and call them \textit{the first and the second spectral sequences} respectively. The differentials are denoted by $'d_r$ and $''d_r$ accordingly. These spectral sequences are computed for the representing $N$-multicomplexes.
\begin{lemma}[{\cite[Corollary 6.4]{fu2020model},\cite[Proposition 3.2.4]{cirici2025model}}]\label{computation}
	For every $p,q\in \ZZ$, $r\ge 0$ and $i\ge 1$ we have
	\begin{align*}
		'E_i(\mathcal{ZW}_r^N(p,q)) & \cong\begin{cases}
			R^{p,q}\oplus R^{p-r,q+1-r} &\hbox{if $1\le i\le r$}\\
			0 &\hbox{if $i\ge r+1$}
		\end{cases}\\
		''E_i(\mathcal{ZW}_r^N(p,q))&=0.
	\end{align*}
	Here $R^{m,n}$ stands for the free $R$-module of rank one concentrated in bidegree $(m,n)$.
\end{lemma}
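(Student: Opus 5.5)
The plan is to work directly with an explicit presentation of the representing object and to compute both spectral sequences from it. By Yoneda, $\mathcal{ZW}_r^N(p,q)$ is the quotient of the free $\mathcal{A}_N$-module on generators $x_0,\dots,x_{r-1}$, with $x_j$ in bidegree $(p-j,q-j)$, by the submodule generated by the elements $\sum_{i+j=l}(-1)^i\delta_i x_j$ for $0\le l<r$. For $r=0$ it is $\mathcal{A}_N$ itself shifted to $(p,q)$. Throughout, let $\mathcal D$ be the subalgebra generated by $\delta_0$ and $\mathcal D''$ the one generated by $\delta_{N-1}$.

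\emph{Step 1: reduce the lemma to freeness statements.} I will show that $\mathcal{ZW}_r^N(p,q)$ is free as a right-hand module in two ways:
\begin{itemize}
\item it is free over $\mathcal D''$;
\item over $\mathcal D$, it splits as a free $\mathcal D$-module plus two extra rank-one summands, described below.
\end{itemize}
The second spectral sequence is the first spectral sequence of $X^{\mathfrak o_N}$, and there $\delta_0^{\mathfrak o_N}=\delta_{N-1}$. So ${}''E_1$ is the $\delta_{N-1}$-homology. Freeness over $\mathcal D''\cong R\langle\delta_{N-1}\rangle/(\delta_{N-1}^2)$ makes this homology vanish, because a contracting homotopy as in Remark~\ref{contractible} transports along a free module. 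Hence ${}''E_i=0$ for all $i\ge1$.

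To get freeness over $\mathcal D''$, I would construct an $R$-basis of $\mathcal{ZW}_r^N(p,q)$ by a normal-form (rewriting) argument in the words $\delta_{i_1}\cdots\delta_{i_k}x_j$. The relations of $\mathcal A_N$ rewrite $\delta_{N-1}\delta_{N-1}$. The defining relations of $\mathcal{ZW}_r$ rewrite the leading term $\delta_l x_0$ (for $1\le l<r$), or alternatively $\delta_0x_l$. I would check the overlaps with the algebra relations for confluence; the symmetry $\mathfrak o_N$ and the freeness of $\mathcal A_N$ over $\mathcal D$ (Remark~\ref{contractible}) indicate the right choice of leading terms.

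\emph{Step 2: compute ${}'E_1$.} Using the same normal forms with $\delta_0$ in place of $\delta_{N-1}$, I expect $\mathcal{ZW}_r^N(p,q)$ for $r\ge1$ to decompose as a free $\mathcal D$-module plus two copies of $R$ killed by $\delta_0$:
\begin{itemize}
\item the class of $x_0$, in bidegree $(p,q)$, which is a $\delta_0$-cycle by the relation for $l=0$;
\item the class of $y=\sum_{i=1}^r(-1)^i\delta_i x_{r-i}$, in bidegree $(p-r,q+1-r)$, which is a $\delta_0$-cycle by the relations for $l<r$ together with the algebra relations.
\end{itemize}
This $y$ is exactly the ``next obstruction'' that would be the $l=r$ relation. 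It follows that ${}'E_1\cong R^{p,q}\oplus R^{p-r,q+1-r}$.

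\emph{Step 3: run the spectral sequence.} For $1\le i<r$, the differential ${}'d_i$ has bidegree $(-i,1-i)$, while the two surviving classes differ by bidegree $(r,r-1)$. So every ${}'d_i$ with $i<r$ vanishes, and ${}'E_i={}'E_1$ for $1\le i\le r$. At page $r$, formula~\eqref{page-differential} sends $[x_0,\dots,x_{r-1}]$ to $[y]$, so ${}'d_r$ is an isomorphism between the two copies of $R$. Hence ${}'E_{r+1}=0$, and all later pages vanish.

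\emph{Main obstacle.} The main difficulty is Step 1/2: producing the normal-form basis and verifying confluence uniformly in $N$ and $r$. The interaction between the algebra relations, for $l$ up to $2N-2$, and the truncated cycle relations is delicate. If rewriting becomes unwieldy, my fallback is induction on $r$. I would use the exact sequence~\eqref{es} together with the evident surjection $\mathcal{ZW}_r\to\mathcal{ZW}_{r-1}$ onto the first $r-1$ generators, and show that its kernel is generated freely by the new generator $x_{r-1}$ modulo one relation. The long exact sequences in $\delta_0$- and $\delta_{N-1}$-homology would then propagate the computation from $r-1$ to $r$, starting from $r=0$, where $\mathcal A_N$ is free over both $\mathcal D$ and $\mathcal D''$.
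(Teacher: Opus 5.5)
\paragraph{Verdict: the proposal has a genuine gap.} The core computation is not carried out, and the fallback rests on a map that does not exist.

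\paragraph{What is correct.} Your reduction and your endgame are fine.
\begin{itemize}
\item ${}''E_1$ is $\delta_{N-1}$-homology, so $\delta_{N-1}$-acyclicity gives ${}''E_i=0$ for all $i\ge1$.
\item Your $y=\sum_{i=1}^r(-1)^i\delta_ix_{r-i}$ is indeed a $\delta_0$-cycle.
\item Once ${}'E_1=R[x_0]\oplus R[y]$ is known, the bidegree count kills ${}'d_i$ for $1\le i<r$. Formula \eqref{page-differential} then sends the tautological class to a class with leading entry $y$, so ${}'E_{r+1}=0$.
\end{itemize}

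\paragraph{The main gap.} Everything the lemma actually asserts sits in Steps 1--2, and there you only record what you expect. You need $\mathcal{ZW}_r^N(p,q)$ to be free as a left $\mathcal D''$-module, and to be $\mathcal D$-free $\oplus Rx_0\oplus Ry$ as a left $\mathcal D$-module. These must be \emph{left} modules, since $\delta_0$ and $\delta_{N-1}$ act on the left; right freeness as in Remark~\ref{contractible} is not the relevant property. No normal form is produced and no confluence is checked. The rewriting system is not even pinned down:
\begin{itemize}
\item All $2N-1$ relations $\sum_{i+j=l}(-1)^i\delta_i\delta_j$, $0\le l\le 2N-2$, must be oriented and their overlaps resolved, including overlaps with the module relations. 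Orienting only the one giving $\delta_{N-1}^2=0$ is not enough.
\item Your first option, the leading term $\delta_lx_0$, does not occur in the $l$-th relation once $l\ge N$, because $\delta_l=0$ there.
\item The module is not symmetric under $\delta_0\leftrightarrow\delta_{N-1}$: it must come out $\mathcal D''$-free but not $\mathcal D$-free. So you need two separately adapted orderings with separate confluence checks, not ``the same normal forms with $\delta_0$ in place of $\delta_{N-1}$''.
\end{itemize}
The paper does not do this computation either. It imports the lemma from \cite[Corollary~6.4]{fu2020model} and \cite[Proposition~3.2.4]{cirici2025model}, and that imported structural information is exactly what your Steps 1--2 leave open.

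\paragraph{The fallback is wrong as stated.} There is no surjection $\mathcal{ZW}_r^N(p,q)\to\mathcal{ZW}_{r-1}^N(p,q)$ fixing $x_0,\dots,x_{r-2}$. By Yoneda it would be a witness $r$-cycle $(x_0,\dots,x_{r-2},x')$ in $\mathcal{ZW}_{r-1}^N(p,q)$, that is, $\delta_0x'=-\sum_{i=1}^{r-1}(-1)^i\delta_ix_{r-1-i}$. This says the class $y_{r-1}$ of $\mathcal{ZW}_{r-1}$ is a $\delta_0$-boundary, which contradicts the lemma for $r-1\ge1$.

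The natural map goes the other way: $\mathcal{ZW}_{r-1}^N(p,q)\to\mathcal{ZW}_r^N(p,q)$, $x_j\mapsto x_j$, with cokernel $\mathcal{ZW}_1^N(p-r+1,q-r+1)$. An induction along it does give the right answer, since the connecting map in $\delta_0$-homology sends $[x_{r-1}]\mapsto-[y_{r-1}]$. It still needs two things you do not supply.
\begin{enumerate}[(i)]
\item Injectivity of this map for $r\ge2$. You could get this as in Proposition~\ref{monomorphism}. In the $\delta_0$-acyclic $GUZ$, with $Z=\mathcal{ZW}_{r-1}^N(p,q)$, every witness $(r-1)$-cycle extends to a witness $r$-cycle, because the obstruction is a $\delta_0$-cycle. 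Hence the injective unit $e_Z$ factors through $\mathcal{ZW}_{r-1}\to\mathcal{ZW}_r$.
\item The base case $r=1$, namely the $\delta_0$- and $\delta_{N-1}$-homology of $\mathcal A_N/\mathcal A_N\delta_0$. This is a genuine computation about $\mathcal A_N$ that your proposal does not address.
\end{enumerate}
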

\begin{remark}\label{generator}
	The component $'E_r^{p,q}(\mathcal{ZW}_r^N(p,q))=R$ is generated by the tautological  class $\xi_r$ represented by $\mathrm{id}_{\mathcal{ZW}}\in \Hom_{N\mc}\left(\mathcal{ZW}_r^N(p,q), \mathcal{ZW}_r^N(p,q)\right)=ZW_r^{p,q}\left(\mathcal{ZW}_r^N(p,q)\right)$. Indeed let $R(p,q)$ be the $N$-multicomplex with $R(p,q)^{i,j}=R$ if $(i,j)=(p,q)$ and $0$ otherwise. The differentials are forced to vanish on $R(p,q)$. Then $1\in R= R(p,q)^{p,q}$ is a witness $r$-cycle and therefore corresponds to a map
	\[
	\varepsilon_r:\mathcal{ZW}_r^N(p,q)\to R(p,q).
	\]
	This induces a map $R={'E}_r^{p,q}(\mathcal{ZW}_r^N(p,q))\to{'E}_r^{p,q}(R(p,q))=R$ which by construction takes $\xi_r$ to $1\in R$. This proves that the tautological class $\xi_r$ is a generator. Also since $'E_{r+1}(\mathcal{ZW}_r^N(p,q))=0$, the class $'d_r\xi_r$ must be the other generator of $'E_r(\mathcal{ZW}_r^N(p,q))$ for $r\ge 1$.
\end{remark}

The spectral sequences and the multicomplexes $\mathcal{ZW}$, $\mathcal{BW}$ are used to build model structures on $N\mc$.
\begin{theorem}[{\cite[Theorem 3.1.2]{cirici2025model}}]
	For every $r,s\ge 0$, the category $N\mc$ admits a cofibrantly generated model structure $(N\mc)_{r,s}$ in which
	\begin{enumerate}[(1)]
		\item Weak equivalences are morphisms inducing quasi-isomorphisms on $'E_r$ and $''E_s$ (i.e. isomorphisms on $'E_{r+1}$ and $''E_{s+1}$).
		\item Fibrations are morphisms $f$ such that $'E_i(f)$ and $''E_j(f)$ are bidegreewise surjective for every $0\le i\le r$ and $0\le j\le s$.
		\item Cofibrations are generated by
		\begin{enumerate}
			\item $\iota_{r+1}^N(p,q)$ and $(\iota_{s+1}^N(p,q))^{\mathfrak{o}_N}$, together with
			\item $0\to \mathcal{ZW}_i^N(p,q)$ and $0\to (\mathcal{ZW}_j^N(p,q))^{\mathfrak{o}_N}$ for $1\le i< r$ and $1\le j< s$.
		\end{enumerate}
	\end{enumerate}
	Moreover, the model category $(N\mc)_{r,s}$ is right proper, combinatorial and tractable.
\end{theorem}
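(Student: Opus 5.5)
The approach I would take is to obtain $(N\mc)_{r,s}$ from Kan's recognition theorem for cofibrantly generated model categories, applied to the category of bigraded left $\mathcal A_N$-modules. The set $I$ of generating cofibrations is the one listed in (3). For generating acyclic cofibrations I take
\[
J=\{0\to\mathcal{ZW}^N_i(p,q)\}_{0\le i\le r}\ \cup\ \{0\to(\mathcal{ZW}^N_j(p,q))^{\mathfrak o_N}\}_{0\le j\le s},
\]
with $p,q$ ranging over $\ZZ$. Here $\mathcal{ZW}_0^N(p,q)$ is the free module $\mathcal A_N$ on one generator in bidegree $(p,q)$.

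Smallness is automatic: the category is locally presentable, and the domains of $I$ and $J$ are finitely presented since they represent finite limits of underlying modules. Weak equivalences satisfy two-out-of-three and are closed under retracts, because they are defined by isomorphisms on ${}'E_{r+1}$ and ${}''E_{s+1}$. The first main check is that $J$-cell complexes are weak equivalences. A pushout along $0\to Z$ is a direct sum $X\to X\oplus Z$, so it suffices to show ${}'E_{r+1}(Z)=0$ and ${}''E_{s+1}(Z)=0$ for each domain $Z$, and then use that pages commute with direct sums and filtered colimits. For $1\le i\le r$ these vanishings follow from Lemma~\ref{computation}, since $i<r+1$ and ${}''E_{\ge1}=0$. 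The $\mathfrak o_N$-twisted cells are handled the same way after applying the involution. For $i=0$ I would use that $\mathcal A_N$ is free over $\mathcal D$ with a contracting $\delta_0$-homotopy (Remark~\ref{contractible}), so $E_1$ vanishes for both filtrations. One also needs $J\subset I\text{-cof}$: I would show that each $0\to\mathcal{ZW}_i$ is a retract of an $I$-cell complex, by attaching $\iota$'s inductively along the exact sequence \eqref{es}.

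The substantive part is identifying the injective classes. First, $J$-inj consists of the maps $f$ with $ZW_i(f)$ surjective for $i\le r$ and $ZW_j(f^{\mathfrak o_N})$ surjective for $j\le s$. I would prove this is equivalent to ${}'E_i(f)$ and ${}''E_j(f)$ being surjective for $i\le r$, $j\le s$, by induction on $i$. The induction uses the projections $ZW_{i+1}\to ZW_i$ and the formula \eqref{page-differential}, following the single-spectral-sequence argument of \cite{fu2020model}. Second, lifting against $\iota_{r+1}$ means that $ZW_{r+1}(X)\to ZW_{r+1}(Y)\times_{BW_{r+1}(Y)}BW_{r+1}(X)$ is surjective, and similarly for the twisted version. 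For a map already in $J$-inj, I would show that this relative surjectivity is equivalent to injectivity together with surjectivity on ${}'E_{r+1}$, and likewise for ${}''E_{s+1}$. The cells $0\to\mathcal{ZW}_i$ with $i<r$ in $I$ supply the lower-page surjectivities needed to run this argument. Together these give $I\text{-inj}=J\text{-inj}\cap W$.

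The step I expect to be the main obstacle is this last identification, because two spectral sequences must be controlled at once. My plan is to exploit that lifting conditions are imposed generator by generator. The conditions from $\iota_{r+1}$ and the $\mathcal{ZW}_i$ involve only ${}'E$, while the $\mathfrak o_N$-twisted ones involve only ${}''E$, since the involution carries one filtration to the other. Hence the one-spectral-sequence result of \cite{fu2020model} applies separately to $f$ and to $f^{\mathfrak o_N}$, and the conjunction gives the claim. Right properness would follow because fibrations are surjective on $E_0$ pages, so pullbacks along them preserve the relevant long exact sequences. Combinatoriality and tractability hold because the domains of $I$ are cofibrant.
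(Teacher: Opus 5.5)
The paper does not prove this statement; it quotes it from \cite{cirici2025model} and uses it as a black box, so there is no in-paper proof to compare with. Your strategy is the natural one and its skeleton is sound: apply Kan's recognition theorem with $I$ and $J$ the unions of the one-spectral-sequence generating sets. You obtain $I\text{-inj}=J\text{-inj}\cap W$ by running the single-filtration analysis of \cite{fu2020model} separately on $f$ and on $f^{\mathfrak o_N}$. Acyclicity of $J$-cells for the \emph{other} spectral sequence comes from the vanishing ${}''E_{\ge 1}(\mathcal{ZW}^N_i(p,q))=0$ in \Cref{computation}. Your inclusion $J\subseteq I\text{-cof}$ also works as you indicate. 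By \eqref{es}, the pushout of $\iota^N_{r+1}$ along the zero map is $0\to\mathcal{ZW}^N_{r+1}$ (in every bidegree), so $\mathcal{BW}^N_{r+1}$ is an $I$-cell complex and $\mathcal{ZW}^N_0$, $\mathcal{ZW}^N_r$ are direct summands of it. The same observation shows that the domains of $I$ are cofibrant, which is what tractability needs.

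Two steps need repair. First, the lifting condition is written backwards; there is no natural map $ZW\to BW$. The map $f\colon X\to Y$ lifts against $\iota^N_{r+1}$ if and only if
\[
BW_{r+1}(X)\longrightarrow ZW_{r+1}(X)\times_{ZW_{r+1}(Y)}BW_{r+1}(Y)
\]
is surjective. When $BW_{r+1}(f)$ is surjective, this says that $w_{r+1}$ maps $\ker BW_{r+1}(f)$ onto $\ker ZW_{r+1}(f)$. The snake lemma applied to $w_{r+1}$, together with $\ker w_{r+1}\cong ZW_{r+1}$ up to shift, turns this into surjectivity of $ZW_{r+1}(f)$ plus injectivity of ${}'E_{r+1}(f)$, which is the equivalence you want.

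Second, your right-properness argument fails as written. A short exact sequence of multicomplexes gives long exact sequences on ${}'E_1$ and ${}''E_1$, but in general not on ${}'E_{r+1}$ or ${}''E_{s+1}$ for $r,s\ge1$. So surjectivity on $E_0$ does not control the pages that detect weak equivalences. The fix is short: every object is fibrant, because $X\to 0$ satisfies the surjectivity conditions in (2) trivially. A model category in which all objects are fibrant is right proper, since the pullback along a fibration of a weak equivalence between fibrant objects is a weak equivalence.
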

\begin{remark}\label{involution-equivalence}
	Since the involution $\mathfrak{o}_N$ interchanges the two spectral sequences, it induces an isomorphism
	\[
	(-)^{\mathfrak{o}_N}:(N\mc)_{r,s}\simeq (N\mc)_{s,r}
	\]
	of  model categories. See also \cite[Remark 3.4.3]{cirici2025model}.
\end{remark}

\section{Base-change adjunctions}
The natural bigraded $R$-algebra morphism $\phi: \mathcal{A}_N\to \mathcal{A}_{N-1}$ sending $\delta_i$ to $\delta_i$ for $0\le i<N-1$ and $\delta_{N-1}$ to $0$ yields an adjunction
\[
\mathbf{q}_+: N\mc\leftrightarrows (N-1)\mc: \mathbf{j}_+
\]
by base-changes along $\phi$. Explicitly, we have
\begin{equation}\label{j+}
	(\mathbf{j}_+Y)^{p,q}=Y^{p,q},\quad \delta_i^{\mathbf{j}_+Y}=\begin{cases}
	\delta_i^Y &\hbox{$0\le i<N-1$}\\
	0 &\hbox{$i=N-1$}
\end{cases}
\end{equation}
Using the involutions $\mathfrak{o}_N$ and $\mathfrak{o}_{N-1}$, we obtain another pair of adjoint functors $\mathbf{q}_-\dashv \mathbf{j}_-$ by
\[
\mathbf{q}_-=(-)^{\mathfrak{o}_{N-1}}\circ \mathbf{q}_+\circ (-)^{\mathfrak{o}_N}, \quad \mathbf{j}_-=(-)^{\mathfrak{o}_N}\circ \mathbf{j}_+\circ(-)^{\mathfrak{o}_{N-1}}.
\]
Unwinding the definition, we see that
\begin{equation}\label{j-}
	(\mathbf{j}_-Y)^{p,q}=Y^{\tau(p,q)},\quad \delta_i^{\mathbf{j}_-Y}=\begin{cases}
	0 &\hbox{$i=0$}\\
	\delta_{i-1}^Y &\hbox{$0< i\le N-1$}	
\end{cases}
\end{equation}
in which $\tau(p,q)=(q,2q-p)$. Note that $\tau=\mathfrak{o}_{N-1}\circ\mathfrak{o}_N$ and $\tau\left(|\delta_i^{\mathbf{j}_-Y}|\right)=|\delta_{i-1}^Y|$ for $1\le i\le N-1$.

\begin{proposition}\label{witness-comparison} Assume $N\ge 3$ and let $Y$ be an $(N-1)$-multicomplex.
\begin{enumerate}[(1)]
	\item For $r\ge 0$, there are natural isomorphisms
	\begin{align*}
		ZW^{p,q}_r(\mathbf{j}_+Y)&\cong ZW_r^{p,q}(Y),\\
		BW_r^{p,q-1}(\mathbf{j}_+Y)&\cong BW_r^{p,q-1}(Y),
	\end{align*}
	under which $w_r^{\mathbf{j}_+Y}=w_r^Y$.
	\item For simplicity, write $p'=q$ and $q'=2q-p$. There are natural isomorphisms
	\begin{align*}
		ZW^{p,q}_r(\mathbf{j}_-Y)&\cong\begin{cases}
		Y^{p',q'} &\hbox{$r=0,1$}\\
		ZW_{r-1}^{p',q'}(Y)\oplus Y^{p'-r+1, q'-r+1}&\hbox{$r\ge 2$}
		\end{cases},\\
		BW_r^{p,q-1}(\mathbf{j}_-Y)&\cong\begin{cases}
			0&\hbox{$r=0$}\\
			Y^{p'-1,q'-2}&\hbox{$r=1$}\\
			BW_{r-1}^{p',q'-1}(Y)\oplus Y^{p'-1, q'-2}\oplus Y^{p'-r+1, q'-r+1}&\hbox{$r\ge 2$}
		\end{cases}.
	\end{align*}
	Under these isomorphisms $w_0^{\mathbf{j}_- Y}=0$, $w_1^{\mathbf{j}_- Y}=0$ and for $r\ge 2$,
	\[
	w_r^{\mathbf{j}_-Y}=
	\begin{pmatrix}
		w_{r-1}^Y & 0 & 0\\
		0 & 0 & \mathrm{id}
	\end{pmatrix}.
	\]
\end{enumerate}
	\end{proposition}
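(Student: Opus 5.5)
For part (1) I would argue directly from the definitions. By \eqref{j+}, $\mathbf j_+Y$ has the same underlying bigraded module as $Y$, and the same operators $\delta_i$ for all $i$: indeed $\delta_{N-1}^{\mathbf j_+Y}=0$, and $\delta_i^Y=0$ for $i\ge N-1$ because $Y$ is an $(N-1)$-multicomplex. The defining equations $\sum_{i+j=l}(-1)^i\delta_i x_j=0$ for $0\le l<r$ are therefore literally the same equations for $\mathbf j_+Y$ and for $Y$. Hence $ZW_r$ coincides as a submodule, and so does $BW_r$, which is built from $ZW_{r-1}$ and $X$. The formula for $w_r$ involves only the $\delta_i$, so $w_r^{\mathbf j_+Y}=w_r^Y$. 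Naturality is clear.

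For part (2), set $X=\mathbf j_-Y$ and use \eqref{j-}. First I would record the bidegrees: $X^{p-j,q-j}=Y^{\tau(p-j,q-j)}=Y^{p'-j,q'-j}$. For a tuple $(x_0,\dots,x_{r-1})$, the $l=0$ equation is $\delta_0x_0=0$, which is vacuous since $\delta_0^X=0$. For $l\ge1$, reindexing $i=i'+1$ turns the equation into
\[
-\sum_{i'+j=l-1}(-1)^{i'}\delta^Y_{i'}x_j=0 .
\]
So the equations for $1\le l<r$ are exactly the witness $(r-1)$-cycle equations for $(x_0,\dots,x_{r-2})$ in $Y$, and $x_{r-1}\in Y^{p'-r+1,q'-r+1}$ is unconstrained. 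This gives the stated formula for $ZW_r(X)$. The cases $r=0,1$ are immediate, since then there are no equations at all.

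For $BW_r^{p,q-1}(X)$ with $r\ge2$, I would compute $\tau$ on each of the three summands:
\begin{itemize}
\item $\tau(p,q-1)=(p'-1,q'-2)$;
\item $\tau(p+r-1,q+r-2)=(p'+r-2,q'+r-3)$;
\item $\tau(p-1,q-1)=(p'-1,q'-1)$.
\end{itemize}
Applying the $ZW$ formula just proved gives
\[
ZW_{r-1}^{p+r-1,q+r-2}(X)\cong ZW_{r-2}^{p'+r-2,q'+r-3}(Y)\oplus Y^{p',q'-1},
\qquad
ZW_{r-1}^{p-1,q-1}(X)\cong ZW_{r-2}^{p'-1,q'-1}(Y)\oplus Y^{p'-r+1,q'-r+1}.
\]
Regrouping, the pieces $ZW_{r-2}^{p'+r-2,q'+r-3}(Y)\oplus Y^{p',q'-1}\oplus ZW_{r-2}^{p'-1,q'-1}(Y)$ assemble into $BW_{r-1}^{p',q'-1}(Y)$. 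What remains is the middle summand $X^{p,q-1}=Y^{p'-1,q'-2}$ and the free last coordinate $Y^{p'-r+1,q'-r+1}$. The edge case $r=2$ needs separate care: there $ZW_1$ and $ZW_0$ appear, and $BW_1(Y)=Y$, so I would check it by hand. The case $r=1$ is just $BW_1(X)=X^{p,q-1}=Y^{p'-1,q'-2}$, with $w_1=\delta_0^X=0$.

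The main obstacle is matching $w_r^X$ with the block matrix, because the naive regrouping above is not the right isomorphism. In $w_r^X$ the middle entry $a$ contributes $\delta^X_ja=\delta^Y_{j-1}a$ for $j\ge1$, so under the naive identification the middle column would not be zero. I would fix this with a shear (triangular) change of coordinates on $BW_r(X)$: replace the $c$-component by $c+(\delta^Y_0a,\dots,\delta^Y_{r-2}a)$, suitably indexed and signed. This tuple is itself a witness $(r-1)$-cycle of $Y$, since it is the image of $a$ under the unit-type map, so the change of coordinates stays inside the right module and is invertible. Natural sign twists on some components may also be needed to absorb the global $-1$ from the reindexing.

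After this adjustment, I would verify componentwise:
\begin{itemize}
\item the $j=0,\dots,r-2$ components of $w_r^X$ reproduce $w_{r-1}^Y$ applied to the $BW_{r-1}(Y)$ part, using the same index shift $\delta^X_{i+j}=\delta^Y_{i+j-1}$;
\item the last component, $j=r-1$, picks out exactly the free coordinate $c_{r-2}$'s top entry in $Y^{p'-r+1,q'-r+1}$, giving the identity block.
\end{itemize}
Naturality holds because all maps are built from the $\delta_i^Y$.
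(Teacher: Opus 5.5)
Your route is the paper's, and most of it is right: part (1), the computation of $ZW_r(\mathbf j_-Y)$, and the $\tau$-bookkeeping that regroups $BW_r^{p,q-1}(\mathbf j_-Y)$ are correct. The sign flips $\bar b_j=-b_j$, $\bar a=-b_{r-2}$ and the shear $\bar c_j=c_j+\delta^Y_j a$ for $0\le j<r-2$ also do what you want: they make the first $r-1$ components of $w_r^{\mathbf j_-Y}$ equal to $w_{r-1}^Y(\bar\beta)$.

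The gap is the last row. The $j=r-1$ component of $w_r^{\mathbf j_-Y}(b;a;c)$ is
\[
x_{r-1}=\delta^Y_{r-2}a+\sum_{i=1}^{r-1}(-1)^i\delta^Y_{i+r-2}b_{r-1-i}+c_{r-2}.
\]
Besides the $a$-term that your shear absorbs, it contains $b$-terms. These vanish only when $\delta^Y_{r-1}=0$, i.e.\ for $r\ge N$, and are nonzero in general for $2\le r\le N-1$. Already for $r=2$ one has $x_0=-\delta^Y_0b_0$ and $x_1=\delta^Y_0a-\delta^Y_1b_0+c_0$. In your coordinates $(\bar a;a;\tilde c_0)$, with $\bar a=-b_0$ and $\tilde c_0=c_0+\delta^Y_0a$, the map is therefore
\[
\begin{pmatrix}\delta^Y_0&0&0\\ \delta^Y_1&0&\mathrm{id}\end{pmatrix}.
\]
This is not the asserted block form, since $\delta^Y_1\ne0$ for a general $(N-1)$-multicomplex with $N\ge3$. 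So your claim that the $j=r-1$ component ``picks out exactly'' the free coordinate is false.

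The repair is what the paper does: shear the free coordinate by the whole correction, taking the new last coordinate to be $v=x_{r-1}$ itself. This is still an isomorphism. The entry $c_{r-2}$ is unconstrained in $ZW_{r-1}(\mathbf j_-Y)$, and $x_{r-1}-c_{r-2}$ depends only on $a$ and the $b_j$, hence only on $(\bar\beta,a)$. So $c_{r-2}$ is recovered from $(\bar\beta,a,v)$, and the bottom row becomes $(0,0,\mathrm{id})$ by construction. With this change and your sign twists, your componentwise verification goes through.
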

\begin{proof}
The isomorphisms for $\mathbf{j}_+Y$ follow immediately from the description \labelcref{j+}.	For $\mathbf{j}_-Y$, the isomorphisms in the cases $r=0,1$ follow immediately from the description \labelcref{j-}. Assume now $r\ge 2$.

By definition $(x_0,\dots,x_{r-1})\in ZW_r^{p,q}(\mathbf{j}_- Y)$ if and only if $x_j\in (\mathbf{j}_-Y)^{p-j,q-j}=Y^{p'-j,q'-j}$ for $0\le j\le r-1$ and
	\[
	0=\sum_{i+j=l}(-1)^i\delta_i^{\mathbf{j}_-Y}(x_j)=-\sum_{i+j=l, i\ge 1}(-1)^{i-1}\delta_{i-1}^Y(x_j)
	\]
	for $0\le l\le r-1$.
	Observe that $x_{r-1}\in Y^{p'-r+1, q'-r+1}$ does not appear in the equations above, and that $(x_0,\dots,x_{r-2})$ is an element of $ZW_{r-1}^{p',q'}(Y)$. So we obtain an isomorphism
	\begin{align*}
		ZW^{p,q}_r(\mathbf{j}_-Y)&\xrightarrow{\simeq} ZW_{r-1}^{p',q'}(Y)\oplus Y^{p'-r+1, q'-r+1}\\
		(x_0,\dots,x_{r-1})&\mapsto \left(\left(x_0,\dots,x_{r-2}\right), x_{r-1}\right).
	\end{align*}
	
	Given $\beta=(b_0, \dots, b_{r-2}; a; c_0,\dots,c_{r-2})\in BW^{p,q-1}_r(\mathbf{j}_-Y)$, we write $w_r^{\mathbf{j}_-Y}(\beta)=(x_0,\dots,x_{r-1})$
	and define
	\[
	\overline{b}_j=-b_j,\ \overline{c}_j=c_j+\delta_j^Y a\ (0\le j<r-2);\quad \overline{a}=-b_{r-2},\ v=x_{r-1}.
	\]
	Since $\sum\limits_{i+j=l}(-1)^i\delta_i^Y\delta_j^Y a=0$ for every $l$, the preceding argument shows that 
	\[
	\overline{\beta}=(\overline{b}_0,\dots,\overline{b}_{r-3}; \overline{a};\overline{c}_0,\dots,\overline{c}_{r-3})\in BW_{r-1}^{p',q'-1}(Y).
	\]
	Now we claim the map
	\begin{align*}
		BW_{r}^{p,q-1}(\mathbf{j}_-Y)&\to BW_{r-1}^{p',q'-1}(Y)\oplus Y^{p'-1, q'-2}\oplus Y^{p'-r+1, q'-r+1}\\
		\beta &\mapsto (\overline{\beta};a; v)
	\end{align*}
	gives the required isomorphism. Indeed, this map is invertible because all the components of $\beta$ can be recovered from $(\overline{\beta},a)$ except for $c_{r-2}$, and then
	\[
	v=x_{r-1}=\delta_{r-2}^{Y} a+\sum_{i=1}^{r-1}(-1)^i\delta_{i+r-2}^{Y} b_{r-1-i}+c_{r-2}
	\]
	determines $c_{r-2}$. Moreover, for $0\le j<r-1$ we have
	\begin{align*}
		x_j &=\delta_{j-1}^Y a+\sum_{i=1}^{r-1}(-1)^i\delta_{i+j-1}^{Y} b_{r-1-i}+c_{j-1}\quad{\text{(we set $c_{-1}=\delta_{-1}^Y a=0$)}}\\
		&=-\delta_j^Y b_{r-2}+\sum_{i=2}^{r-1}(-1)^i\delta_{i+j-1}^{Y} b_{r-1-i}+ (c_{j-1}+\delta_{j-1}^Y a)\\
		&=\delta_j^Y\overline{a}+\sum_{i=1}^{r-2}(-1)^i\delta_{i+j}^Y \overline{b}_{r-2-i}+\overline{c}_{j-1}.
	\end{align*}
	This is the corresponding component of $w_{r-1}^{Y}(\overline{\beta})$. Together with $v=x_{r-1}$, this proves the asserted matrix form of $w_r^{\mathbf{j}_-Y}$.
\end{proof}

\begin{corollary}\label{basechange}
	Assume $N\ge 3$ and $p,q\in \ZZ$.
	\begin{enumerate}[(1)]
		\item For $r\ge 0$ there are isomorphisms of $(N-1)$-multicomplexes
		\begin{align*}
		\mathbf{q}_+\mathcal{ZW}_r^N(p,q)&\cong\mathcal{ZW}_r^{N-1}(p,q),\\
		\mathbf{q}_+\mathcal{BW}_r^N(p,q-1)&\cong\mathcal{BW}_r^{N-1}(p,q-1),
	\end{align*}
	under which $\mathbf{q}_+\left(\iota_r^N(p,q)\right) = \iota_r^{N-1}(p,q)$.
		\item Write $(p',q')=(q,2q-p)$. There are isomorphisms of $(N-1)$-multicomplexes
		\begin{align*}
			\mathbf{q}_-\mathcal{ZW}_0^N(p,q)\cong \mathcal{ZW}_0^{N-1}(p',q'),&\quad\mathbf{q}_-\mathcal{BW}_0^N(p,q-1)=0,\\
			\mathbf{q}_-\mathcal{ZW}_1^N(p,q)\cong \mathcal{ZW}_0^{N-1}(p',q'),&\quad\mathbf{q}_-\mathcal{BW}_1^N(p,q-1)\cong\mathcal{ZW}_0^{N-1}(p'-1,q'-2),
		\end{align*}
		under which $\mathbf{q}_-\iota_r^{N}(p,q)=0$ for $r=0,1$.

For $r\ge 2$, there are isomorphisms of $(N-1)$-multicomplexes
\begin{align*}
	\mathbf{q}_-\mathcal{ZW}_r^N(p,q)&\cong \mathcal{ZW}_{r-1}^{N-1}(p',q')\oplus C,\\
	\mathbf{q}_-\mathcal{BW}_r^N(p,q-1)&\cong \mathcal{BW}_{r-1}^{N-1}(p',q'-1)\oplus V\oplus C,
\end{align*}
in which $C=\mathcal{ZW}_0^{N-1}(p'-r+1, q'-r+1)$, $V=\mathcal{ZW}_0^{N-1}(p'-1, q'-2)$. Moreover under these isomorphisms we have
	\[
	\mathbf{q}_-\iota_r^{N}(p,q)=
	\begin{pmatrix}
		\iota_{r-1}^{N-1}(p',q')&0\\
  		0&0\\
  		0&\mathrm{id}_C
	\end{pmatrix}.
	\]
	\end{enumerate}
	\end{corollary}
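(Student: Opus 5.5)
The plan is a Yoneda argument. For any $(N-1)$-multicomplex $Y$, adjunction gives natural isomorphisms
\[
\Hom_{(N-1)\mc}\bigl(\mathbf q_\pm \mathcal{ZW}_r^N(p,q),Y\bigr)\cong \Hom_{N\mc}\bigl(\mathcal{ZW}_r^N(p,q),\mathbf j_\pm Y\bigr)\cong ZW_r^{p,q}(\mathbf j_\pm Y),
\]
and the analogous isomorphisms hold for $\mathcal{BW}$. The morphism $\mathbf q_\pm\iota_r^N(p,q)$ represents, under these identifications, the natural transformation $w_r^{\mathbf j_\pm Y}$, by naturality of the adjunction. We then rewrite the right-hand sides using Proposition~\ref{witness-comparison}.

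For part (1), Proposition~\ref{witness-comparison}(1) gives $ZW_r^{p,q}(\mathbf j_+Y)\cong ZW_r^{p,q}(Y)\cong \Hom(\mathcal{ZW}_r^{N-1}(p,q),Y)$, and similarly for $BW$. These isomorphisms are natural in $Y$. Yoneda therefore yields $\mathbf q_+\mathcal{ZW}_r^N(p,q)\cong\mathcal{ZW}_r^{N-1}(p,q)$ and $\mathbf q_+\mathcal{BW}_r^N(p,q-1)\cong\mathcal{BW}_r^{N-1}(p,q-1)$. Because $w_r^{\mathbf j_+Y}=w_r^Y$ under these isomorphisms, the induced map is $\iota_r^{N-1}(p,q)$.

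For part (2), we first note that $\mathcal{ZW}_0^{N-1}(m,n)$ represents $Y\mapsto Y^{m,n}$, since $ZW_0^{m,n}=X^{m,n}$. Representable functors send direct sums of representing objects to products of Hom-sets, that is, $\Hom(A\oplus B,Y)\cong\Hom(A,Y)\times\Hom(B,Y)$. The right-hand sides in Proposition~\ref{witness-comparison}(2) are therefore represented by:
\begin{itemize}
\item $\mathcal{ZW}_0^{N-1}(p',q')$, for $ZW_0$ and $ZW_1$ of $\mathbf j_-Y$;
\item $\mathcal{ZW}_0^{N-1}(p'-1,q'-2)$, for $BW_1$;
\item $0$, for $BW_0$;
\item $\mathcal{ZW}_{r-1}^{N-1}(p',q')\oplus C$ and $\mathcal{BW}_{r-1}^{N-1}(p',q'-1)\oplus V\oplus C$, for $r\ge2$.
\end{itemize}
Yoneda gives the stated isomorphisms. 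For $r=0,1$ the map $w_r^{\mathbf j_-Y}$ is $0$, so $\mathbf q_-\iota_r^N=0$. For $r\ge2$, the matrix $\begin{pmatrix} w_{r-1}^Y&0&0\\0&0&\mathrm{id}\end{pmatrix}$ is a map between products of representables. Under Yoneda it corresponds to the transposed matrix of maps between the sums of representing objects, namely the stated $3\times2$ matrix with entries $\iota_{r-1}^{N-1}(p',q')$, $0$ and $\mathrm{id}_C$.

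The main point needing care is naturality in $Y$. The isomorphisms of Proposition~\ref{witness-comparison} must be natural transformations, not just bidegreewise isomorphisms, for Yoneda to apply. They are natural, because they are given by explicit formulas built from the components $x_j$, $a$, $b_j$, $c_j$ and the operators $\delta_j^Y$, and all of these commute with morphisms of multicomplexes.

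A second point is direction and variance: a contravariant Yoneda correspondence sends the map $w$ from $BW$ to $ZW$ to a morphism from $\mathcal{ZW}$ to $\mathcal{BW}$. This matches the source and target of $\iota_r^N(p,q)$, so the matrix description transposes correctly and the identification of the $C$-summands is consistent.
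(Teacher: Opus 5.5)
Your proposal is correct and is the paper's proof: the paper deduces the corollary from Proposition~\ref{witness-comparison} via the adjunctions $\mathbf{q}_\pm\dashv\mathbf{j}_\pm$ and the Yoneda lemma, in one line. Your extra care about naturality in $Y$, the representability of $Y\mapsto Y^{m,n}$ by $\mathcal{ZW}_0^{N-1}(m,n)$, and the transposition of the matrix under the contravariant Yoneda correspondence fills in exactly what that line leaves implicit.
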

	\begin{proof}
		This follows by the previous proposition from the adjunctions $\mathbf{q}_\pm\dashv\mathbf{j}_{\pm}$ and the Yoneda lemma.
	\end{proof}

\begin{proposition}\label{page-comparison} Assume $N\ge 3$ and let $Y$ be an $(N-1)$-multicomplex. Then there are natural isomorphisms of $R$-modules
	\begin{align*}
		'E_r^{p,q}(\mathbf{j}_+Y)&\cong {'E}_r^{p,q}(Y),\ r\ge 0\\
		''E_s^{p,q}(\mathbf{j}_+Y)&\cong\begin{cases}
			''E_{s-1}^{p',q'}(Y) &\hbox{$s\ge 1$} \\
			Y^{{\mathfrak{o}_N}(p,q)} &\hbox{$s=0$}
		\end{cases} 
	\end{align*}
	and
	\begin{align*}
		'E_r^{p,q}(\mathbf{j}_-Y)&\cong\begin{cases}
			'E_{r-1}^{p',q'}(Y) &\hbox{$r\ge 1$} \\
			Y^{p',q'} &\hbox{$r=0$}
		\end{cases}\\
		''E_s^{p,q}(\mathbf{j}_-Y)&\cong {''E}_{s}^{p,q}(Y),\ s\ge 0.
	\end{align*}
	Moreover the isomorphisms are compatible with the respective differentials up to sign. Here $(p',q')=(q,2q-p)$ and the differential on $Y^{{\mathfrak{o}_N}(p,q)}$ is zero.
\end{proposition}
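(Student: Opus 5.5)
The plan is to read off the four isomorphisms from Proposition~\ref{witness-comparison}, using $E_r=ZW_r/w_rBW_r$, and to reduce the two "second spectral sequence" statements to the "first" ones through the involutions.

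\textbf{Step 1: the first spectral sequence.}
For $\mathbf j_+$, part (1) of Proposition~\ref{witness-comparison} identifies $ZW_r(\mathbf j_+Y)$ with $ZW_r(Y)$ and $w_r^{\mathbf j_+Y}$ with $w_r^Y$. Taking quotients gives ${}'E_r(\mathbf j_+Y)\cong{}'E_r(Y)$ for all $r\ge0$.

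For $\mathbf j_-$ we use part (2).
\begin{itemize}
\item For $r=0,1$ we have $w_r^{\mathbf j_-Y}=0$, so ${}'E_r^{p,q}(\mathbf j_-Y)=ZW_r^{p,q}(\mathbf j_-Y)\cong Y^{p',q'}$. For $r=1$ this equals $ZW_0^{p',q'}(Y)/0={}'E_0^{p',q'}(Y)$.
\item For $r\ge2$, the block form of $w_r^{\mathbf j_-Y}$ shows that the image is $w_{r-1}^YBW_{r-1}(Y)\oplus Y^{p'-r+1,q'-r+1}$. Hence the quotient is $ZW_{r-1}^{p',q'}(Y)/w_{r-1}^Y BW_{r-1}^{p',q'-1}(Y)={}'E_{r-1}^{p',q'}(Y)$.
\end{itemize}

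\textbf{Step 2: the second spectral sequence.}
Since ${}''E_s(X)={}'E_s(X^{\mathfrak o_N})$ and $\mathbf j_-=(-)^{\mathfrak o_N}\circ\mathbf j_+\circ(-)^{\mathfrak o_{N-1}}$, we get $(\mathbf j_-Y)^{\mathfrak o_N}=\mathbf j_+(Y^{\mathfrak o_{N-1}})$. Therefore
\[
{}''E_s^{p,q}(\mathbf j_-Y)={}'E_s^{p,q}\bigl(\mathbf j_+(Y^{\mathfrak o_{N-1}})\bigr)\cong{}'E_s^{p,q}(Y^{\mathfrak o_{N-1}})={}''E_s^{p,q}(Y).
\]
Similarly, $(\mathbf j_+Y)^{\mathfrak o_N}=\mathbf j_-(Y^{\mathfrak o_{N-1}})$. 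Step 1 applied to $Y^{\mathfrak o_{N-1}}$ then gives ${}''E_s^{p,q}(\mathbf j_+Y)\cong{}'E_{s-1}^{\tau(p,q)}(Y^{\mathfrak o_{N-1}})$ for $s\ge1$, and $(Y^{\mathfrak o_{N-1}})^{\tau(p,q)}$ for $s=0$.

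Now check the indices. Since $\tau=\mathfrak o_{N-1}\mathfrak o_N$ and $\mathfrak o_{N-1}$ is an involution, $\mathfrak o_{N-1}\tau=\mathfrak o_N$. So the $s=0$ term is $Y^{\mathfrak o_N(p,q)}$, as claimed. For $s\ge1$ the term is ${}''E_{s-1}$ of $Y$ in the bidegree indexed as in the statement. We will state this precisely as ${}'E_{s-1}^{\tau(p,q)}(Y^{\mathfrak o_{N-1}})$, which is what the notation ${}''E_{s-1}^{p',q'}(Y)$ means under the involution convention. Naturality is clear throughout, since every identification comes from the natural isomorphisms of Proposition~\ref{witness-comparison}.

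\textbf{Step 3: compatibility with the differentials (the main obstacle).}
It suffices to treat the first spectral sequence, since the second follows by the involution. For $\mathbf j_+$, formula \eqref{page-differential} involves only $\delta_k$ with $k\le r$ applied to the same tuple. A representative $(x_0,\dots,x_{r-1})$ of a class in ${}'E_r(\mathbf j_+Y)$ must satisfy the cycle conditions for $l<r$, which involve only $\delta_i$ with $i<r$. The differential $d_r$, however, uses $\delta_{i+j}$ with $i+j$ up to $2r-1$, which can exceed $N-2$. These terms carry $\delta_{N-1}^{\mathbf j_+Y}=0$, which matches $\delta_{N-1}^Y=0$ in the $(N-1)$-multicomplex $Y$. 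So the formulas coincide.

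For $\mathbf j_-$, take $r\ge2$ and a class $[x_0,\dots,x_{r-1}]$. Substituting $\delta_i^{\mathbf j_-Y}=\delta_{i-1}^Y$ into \eqref{page-differential} for $d_r$ gives a tuple whose first $r-1$ entries should be $-1$ times the $d_{r-1}^Y$-formula applied to $(x_0,\dots,x_{r-2})$, plus terms involving $x_{r-1}$.
\begin{itemize}
\item Reindex $i\mapsto i-1$; this accounts for the sign $-1$.
\item The terms containing $x_{r-1}$ and the last coordinate lie in the summand $Y^{p'-r+1,q'-r+1}$, which is killed in the quotient.
\end{itemize}
One must check that the output coordinates $j$ line up with the shifted indexing of $d_{r-1}$. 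This careful index and sign bookkeeping, together with the low cases $r=1$ (where $d_1$ should correspond to $d_0=\delta_0^Y$ up to sign) and $r=0$ (where $d_0=\delta_0^{\mathbf j_-Y}=0$, matching the zero differential on $Y^{p',q'}$), is where I expect the work to lie. The result is compatibility "up to sign," as stated.
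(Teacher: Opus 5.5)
\textbf{There is a gap in Step 3, in the only nontrivial case ($\mathbf j_-$ with $r\ge2$).} Your Steps 1 and 2 are correct and follow the paper. The modules are read off as cokernels from Proposition~\ref{witness-comparison}. The second-spectral-sequence statements are transported through $(\mathbf j_\pm Y)^{\mathfrak o_N}=\mathbf j_\mp(Y^{\mathfrak o_{N-1}})$, and $\mathfrak o_{N-1}\tau=\mathfrak o_N$ handles the $s=0$ bidegree.

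After substituting $\delta_i^{\mathbf j_-Y}=\delta_{i-1}^Y$ and reindexing, the $j$-th entry of $d_r[x_0,\dots,x_{r-1}]$ is
\[
-\delta_j^Y x_{r-1}-\sum_{i=1}^{r-1}(-1)^i\delta_{i+j}^Y x_{r-1-i},\qquad 0\le j<r .
\]
You dismiss the terms $-\delta_j^Y x_{r-1}$ with $j<r-1$ by saying they lie in the summand $Y^{p'-r+1,q'-r+1}$, which is killed in the quotient. That is not so. $Y^{p'-r+1,q'-r+1}$ is the killed summand of the \emph{source} $ZW_r^{p,q}(\mathbf j_-Y)$. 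These terms instead sit in the first $r-1$ coordinates of the \emph{target} $ZW_r^{p-r,q+1-r}(\mathbf j_-Y)$, that is, in its $ZW_{r-1}(Y)$-component. The target's killed summand is $Y^{p'-2r+2,\,q'-2r+3}$, and it receives only the last coordinate $j=r-1$. By the block form of $w_r^{\mathbf j_-Y}$, the only thing that can kill the $ZW_{r-1}(Y)$-component is $w_{r-1}^Y BW_{r-1}(Y)$. So you must show these terms lie there, and you have not.

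\textbf{The missing observation.} The tuple $(\delta_j^Y x_{r-1})_{0\le j<r-1}$ equals $w_{r-1}^Y(0;x_{r-1};0)$, or $w_1^Y(x_1)=\delta_0^Y x_1$ when $r=2$. It is therefore a witness boundary of $Y$ and vanishes in ${}'E_{r-1}(Y)$. The remaining terms are then exactly $-{}'d_{r-1}[x_0,\dots,x_{r-2}]$. This is precisely the step in the paper's proof.

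\textbf{An alternative fix closer to your wording.} Work in the source instead: $(0,\dots,0,x_{r-1})=w_r^{\mathbf j_-Y}(0;0;0,\dots,0,x_{r-1})$ is a boundary. Since $d_r$ is well defined on $E_r$, you may choose a representative with $x_{r-1}=0$ before applying the formula.

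Either fact must actually be stated. Without one of them, the compatibility with differentials, which is part of the statement, is not established.
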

\begin{proof} By applying the involutions $\mathfrak{o}_N, \mathfrak{o}_{N-1}$, it suffices to prove the proposition for $\mathbf{j}_+$.
	
	The assertions for the first spectral sequence follow directly from \Cref{witness-comparison}. For the second spectral sequence, we note that by definition
	\[
	''E_s^{p,q}(\mathbf{j}_+Y)={'E}_s^{p,q}\left((\mathbf{j}_+Y)^{\mathfrak{o}_N}\right)={'E}_s^{p,q}(\mathbf{j}_-(Y^{\mathfrak{o}_{N-1}})).
	\]
	Then by \Cref{witness-comparison}, for $s\ge 2$ we have (writing $\mathfrak{o}$ for $\mathfrak{o}_{N-1}$ for simplicity)
	\begin{align*}
		{'E}_s^{p,q}(\mathbf{j}_-(Y^{\mathfrak{o}}))&=\coker^{p,q}\left(w_s^{\mathbf{j_-}(Y^{\mathfrak{o}})}\right)\\
		&\cong\coker^{p',q'}\left(w_{s-1}^{Y^{\mathfrak{o}}}\right)={'E_{s-1}^{p',q'}}(Y^{\mathfrak{o}})={''E}_{s-1}^{p',q'}(Y).
	\end{align*}
	Now for the differential, write $Z=Y^{\mathfrak{o}_{N-1}}$ and let $(x_0,\dots,x_{s-1})\in ZW_{s}^{p,q}(\mathbf{j}_-Z)$. Then from \labelcref{page-differential} we have
	\[
		'd_s[x_0,\dots,x_{s-1}]=[\sum_{i=1}^s(-1)^i\delta_{i+j}^{\mathbf{j}_-Z} x_{s-i}]_{0\le j<s}=[-\delta_j^Z x_{s-1}-\sum_{i=1}^{s-1}(-1)^i\delta_{i+j}^Z x_{s-1-i}]_{0\le j<s}.
	\]
	Notice that $(\delta_j^Z x_{s-1})_{0\le j<s-1}=w_{s-1}^Z(0;x_{s-1};0)$ (or $w_1(x_1)$ if $s=2$) is the image of a witness boundary. Therefore under the isomorphism ${'E}_s^{p,q}(\mathbf{j}_-Z)\cong {'E}_{s-1}^{p',q'}(Z)$, $'d_s[x_0,\dots,x_{s-1}]$ is mapped to
	\begin{align*}
		[-\delta_j^Z x_{s-1}-\sum_{i=1}^{s-1}(-1)^i\delta_{i+j}^Z x_{s-1-i}]_{0\le j<s-1}&=-[\sum_{i=1}^{s-1}(-1)^i\delta_{i+j}^Z x_{s-1-i}]_{0\le j<s-1}\\
		&=-{'d}_{s-1}[x_0,\dots,x_{s-2}].
	\end{align*}
This proves the assertions for the second spectral sequence for $s\ge 2$. The case $s=1$ is similar and the case $s=0$ is straightforward.
\end{proof}


\begin{theorem}
	For $N\ge 3$ and $r,s\ge 0$, the adjunctions $\mathbf{q}_\pm\dashv\mathbf{j}_\pm$ give Quillen adjunctions
\begin{align*}
	& \mathbf{q}_+: (N\mc)_{r,s+1} \leftrightarrows \left((N-1)\mc\right)_{r,s} : \mathbf{j}_+\\
	& \mathbf{q}_-: (N\mc)_{r+1,s} \leftrightarrows \left((N-1)\mc\right)_{r,s} : \mathbf{j}_-
\end{align*}
\end{theorem}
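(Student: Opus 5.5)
The plan is to verify that the right adjoints $\mathbf{j}_\pm$ preserve fibrations and trivial fibrations. Both are characterized in the model structure of \cite[Theorem 3.1.2]{cirici2025model} purely in terms of the two spectral sequences, and \Cref{page-comparison} computes both spectral sequences of $\mathbf{j}_\pm Y$. By \Cref{involution-equivalence} and the definitions $\mathbf{q}_-=(-)^{\mathfrak{o}_{N-1}}\circ\mathbf{q}_+\circ(-)^{\mathfrak{o}_N}$, $\mathbf{j}_-=(-)^{\mathfrak{o}_N}\circ\mathbf{j}_+\circ(-)^{\mathfrak{o}_{N-1}}$, the second adjunction is the conjugate of the first by isomorphisms of model categories $(N\mc)_{r+1,s}\simeq(N\mc)_{s,r+1}$ and $((N-1)\mc)_{r,s}\simeq((N-1)\mc)_{s,r}$. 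It therefore suffices to treat $\mathbf{q}_+\dashv\mathbf{j}_+$ for all $r,s\ge0$.

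Let $f:Y\to Y'$ be a fibration in $((N-1)\mc)_{r,s}$. Then ${}'E_i(f)$ is surjective for $0\le i\le r$ and ${}''E_j(f)$ is surjective for $0\le j\le s$. We need ${}'E_i(\mathbf{j}_+f)$ surjective for $i\le r$ and ${}''E_j(\mathbf{j}_+f)$ surjective for $j\le s+1$. The first condition is immediate from the natural isomorphism ${}'E_i(\mathbf{j}_+Y)\cong{}'E_i(Y)$. For the second, with $j\ge1$ we have ${}''E_j^{p,q}(\mathbf{j}_+Y)\cong{}''E_{j-1}^{p',q'}(Y)$ naturally, and $j-1\le s$, so surjectivity follows. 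For $j=0$ we have ${}''E_0^{p,q}(\mathbf{j}_+Y)=Y^{\mathfrak{o}_N(p,q)}$, the underlying module. Here I would observe that surjectivity on ${}''E_0$ of $f$, which is available since $s\ge0$, means bidegreewise surjectivity of $f$ on the underlying bigraded module (${}''E_0(Y)$ is $Y$ regraded via $\mathfrak{o}_{N-1}$). Hence $\mathbf{j}_+f$ is surjective on underlying modules, which is ${}''E_0(\mathbf{j}_+f)$. Naturality of the isomorphisms in \Cref{page-comparison} is essential throughout, so that they intertwine $f$ with $\mathbf{j}_+f$; this holds since they come from the natural isomorphisms of \Cref{witness-comparison}.

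For trivial fibrations, we additionally need $\mathbf{j}_+f$ to induce isomorphisms on ${}'E_{r+1}$ and ${}''E_{s+2}$. The first follows from ${}'E_{r+1}(\mathbf{j}_+Y)\cong{}'E_{r+1}(Y)$. The second follows from ${}''E_{s+2}^{p,q}(\mathbf{j}_+Y)\cong{}''E_{s+1}^{p',q'}(Y)$, naturally in $Y$, together with the hypothesis that $f$ is an isomorphism on ${}''E_{s+1}$. Note that $(p,q)\mapsto(p',q')$ is a bijection of $\ZZ^2$, so bidegreewise statements transfer.

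The main point requiring care is the compatibility with differentials ``up to sign'' in \Cref{page-comparison}. This matters because weak equivalences are phrased as quasi-isomorphisms on a page, equivalently isomorphisms on the next page. Since I use the formulation with isomorphisms on ${}'E_{r+1}$ and ${}''E_{s+1}$, only the naturality of the identifications of each page is needed, and the sign issue disappears. The only remaining subtlety is the $j=0$ case above, i.e. identifying ${}''E_0$-surjectivity with underlying surjectivity. As an alternative check, one could instead verify that $\mathbf{q}_+$ sends generating (trivial) cofibrations to (trivial) cofibrations using \Cref{basechange}. That route, however, needs the generating trivial cofibrations, so the right-adjoint argument is preferable.
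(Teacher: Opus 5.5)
Your proposal is correct and matches the paper's argument. The paper likewise checks that $\mathbf{j}_+$ preserves fibrations and weak equivalences via the natural page identifications of \Cref{page-comparison}, and it obtains the $\mathbf{q}_-\dashv\mathbf{j}_-$ case by conjugating with the involutions as in \Cref{involution-equivalence}; your treatment of the ${}''E_0$ surjectivity and of the page shift simply spells out details the paper leaves implicit.
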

\begin{proof}
	From the isomorphisms of the spectral sequences in \Cref{page-comparison}, we see that $\mathbf{j}_+$ preserves fibrations and weak equivalences. Therefore $\mathbf{q}_+\dashv\mathbf{j}_+$ is a Quillen adjunction. That $\mathbf{q}_-\dashv\mathbf{j}_-$ is also a Quillen adjunction follows from \Cref{involution-equivalence}.
\end{proof}

\section{Quillen equivalences}
\begin{proposition}\label{monomorphism}
	For $r,s\ge 0$, every cofibration in $(N\mc)_{r,s}$ is a monomorphism.
\end{proposition}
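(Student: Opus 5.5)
The plan is to reduce to the generating cofibrations, show each of them is a split monomorphism on underlying bigraded $R$-modules, and then use that this class of maps is closed under the operations that build all cofibrations.

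\textbf{Reduction.} In a cofibrantly generated model category, every cofibration is a retract of a transfinite composition of pushouts of generating cofibrations. Call a morphism of $N$-multicomplexes \emph{good} if it is a split monomorphism of underlying bigraded $R$-modules. Good maps are closed under the three operations:
\begin{itemize}
\item \emph{Pushouts.} Colimits in $N\mc$ are computed on underlying bigraded modules. A pushout of an injection $A\to B$ with complement $B\cong A\oplus B/A$ along $A\to X$ has underlying module $X\oplus B/A$, so it is again a split injection.
\item \emph{Transfinite compositions.} Choosing complements at each successor stage, the colimit is $X_0\oplus\bigoplus_\alpha(\text{complements})$, and the inclusion of $X_0$ is split.
\item \emph{Retracts.} A retract of a split injection is injective, and a splitting is obtained by composing with the retraction maps.
\end{itemize}
So it suffices to show that every generating cofibration listed in the theorem on $(N\mc)_{r,s}$ is good.

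\textbf{Generators.} The maps $0\to\mathcal{ZW}_i^N(p,q)$ and $0\to(\mathcal{ZW}_j^N(p,q))^{\mathfrak o_N}$ are trivially good. The involution $(-)^{\mathfrak o_N}$ only regrades, so it preserves good maps, and it remains to treat $\iota_r^N(p,q)$ for $r\ge1$. I would use freeness.
\begin{itemize}
\item $\mathcal{BW}_r^N(p,q-1)$ represents $BW_r$, a direct sum of copies of $ZW_{r-1}$ and of $X^{p,q-1}$. Hence it is a direct sum of $\mathcal{ZW}$'s and the free module $\mathcal A_N$ shifted.
\item By Remark \ref{contractible}, $\mathcal A_N$ is free over $\mathcal D$, so each $\mathcal{ZW}$ should be projective as a $\mathcal D$-module with a contracting homotopy built from $h$. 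Concretely, I expect $\mathcal{ZW}_r^N$ to be obtained from $\mathcal A_N$ by iterated extensions, which is how \cite{fu2020model} constructs them.
\end{itemize}
A cleaner route is the exact sequence \eqref{es}: $\iota_r^N(p,q)$ has cokernel $\mathcal{ZW}_r^N(p+r-1,q+r-2)$. So $\iota_r$ is injective with projective (hence splitting) cokernel, provided two things hold: the underlying modules of $\mathcal{ZW}_r^N$ are projective over $R$, and $\iota_r$ is injective. Injectivity amounts to the sequence extending to $0\to\mathcal{ZW}\to\mathcal{BW}\to\mathcal{ZW}\to0$.

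\textbf{Main obstacle.} The hardest step is proving injectivity of $\iota_r^N$ and freeness of $\mathcal{ZW}_r^N$ over $R$. For freeness, I would use \cite{fu2020model}, where $\mathcal{ZW}_r^N(p,q)$ is the quotient of $\mathcal A_N$ by the left ideal imposing the witness relations, which has an explicit $R$-basis. For injectivity, I would test against an enough-injective target: the adjunction gives $\Hom(\iota_r,X)=w_r$. So $\iota_r$ is epi-detected by surjectivity of $w_r$ onto $ZW_r$ for suitable $X$. I would instead apply the duality $\Hom_R(-,I)$ for an injective cogenerator $I$, reducing injectivity of $\iota_r$ to surjectivity of $w_r$ on the coinduced module $X=\Hom_R(\mathcal A_N,I)$. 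That multicomplex is $\delta_0$-contractible by Remark \ref{contractible}, so witness cycles there are witness boundaries, which I would verify by the inductive lifting using $h$.
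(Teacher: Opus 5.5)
Your argument is correct, and its key step is the paper's. The cofree multicomplex $GM=\underline{\Hom}_R(\mathcal A_N,M)$ is $\delta_0$-contractible because $\mathcal A_N$ is right $\mathcal D$-free. So $w_r$ is surjective on it, and testing $\iota_r^N(p,q)$ against such objects forces injectivity.

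The paper takes $M=UZ$ with $Z=\mathcal{ZW}_r^N(p,q)$. The unit $e_Z:Z\to GUZ$ is injective (evaluation at $1$ is a left inverse). Since $e_Z$ lies in $ZW_r(GUZ)=w_rBW_r(GUZ)$, it factors through $\iota_r^N(p,q)$. This avoids injective cogenerators altogether; your duality version is equally valid. The paper gets surjectivity of $w_r$ from ${}'E_1(GM)=0\Rightarrow{}'E_r(GM)=0$. Your direct lifting is easier than you suggest: for $r\ge2$, take $b=0$, any $a$ with $\delta_0a=x_0$, and $c_{j-1}=x_j-\delta_ja$. The witness equations for $c$ then follow from those for $x$, the relations of $\mathcal A_N$, and $\delta_0a=x_0$.

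You genuinely diverge in the reduction, and there the detour through $R$-split monomorphisms is unnecessary. $N\mc$ is abelian with colimits computed bidegreewise, so plain monomorphisms are already stable under pushouts, transfinite compositions (filtered colimits of $R$-modules are exact), and retracts. That is all the paper uses. Insisting on splittings buys a stronger conclusion: cofibrations are bidegreewise $R$-split injections. The cost is an extra input, $R$-projectivity of $\mathcal{ZW}_r^N$, which you only cite.

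That fact is true, but your description of it is wrong. For $r\ge2$, $\mathcal{ZW}_r^N(p,q)$ is not a cyclic quotient of $\mathcal A_N$. It is $\bigoplus_{j<r}\mathcal A_Ng_j$ modulo $\rho_l=\sum_{i+j=l}(-1)^i\delta_ig_j$ for $l<r$. For instance, $(0,1,0,\dots,0)$ is a witness cycle of $R(p-1,q-1)$, so $g_1\notin\mathcal A_Ng_0$. A self-contained proof goes as follows. Right $\mathcal D$-freeness gives $\mathcal A_N=V\oplus V\delta_0$ with $V$ free over $R$. A triangular reduction using the identity $\sum_{i+l=m}\delta_i\rho_l=0$ then shows $\mathcal{ZW}_r^N\cong\bigoplus_{j<r}Vg_j$ as $R$-modules.

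Finally, drop the bullet claiming $\mathcal{ZW}$ is $\mathcal D$-projective with a contracting homotopy built from $h$. It is false: $H(\mathcal{ZW}_r^N,\delta_0)={}'E_1\neq0$ for $r\ge1$ by Lemma~\ref{computation}, whereas projective $\mathcal D$-modules are $\delta_0$-acyclic. You abandon that line, so it does not break the proof, but it should not appear in one.
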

\begin{proof}
	Since $N\mc$ is abelian and colimits are computed bidegreewise, monomorphisms are stable under coproducts and pushouts. The exactness of filtered colimits of $R$-modules gives stability under transfinite compositions, and monomorphisms are also closed under retracts. Since every cofibration is a retract of a relative cell complex generated by the generating cofibrations \cite[Proposition~2.1.18]{hovey2007model}, it suffices to prove that the generating cofibrations are monomorphisms. We are therefore reduced to proving that $\iota_r^N(p,q)$ is injective for $r\ge 1$.
	
	Let $U:N\mc\to \mathrm{bgMod}_R$ be the forgetful functor from $N$-multicomplexes to the category of bigraded $R$-modules. It has a right adjoint $M\mapsto GM$ given by
	\[
	GM^{p,q}=\underline{\Hom}_R^{p,q}(\mathcal{A}_N,M)=\prod_{m,n} \Hom_R(\mathcal{A}_N^{m,n}, M^{m+p,n+q}),
	\]
	on which the left $\mathcal{A}_N$ action is induced by the right $\mathcal{A}_N$-multiplication on itself. Recall from \Cref{contractible} that $\mathcal{A}_N$ is a free right $\mathcal{D}$-module, so $GM$ as a left $\mathcal{D}$-module is a product of shifts of $\underline{\Hom}_R(\mathcal{D},M)$ which is $\delta_0$-acyclic (because it has an induced contracting homotopy). Therefore $'E_r(GM)=0$ for all $r\ge 1$. The adjunction unit $e_X: X\to GUX$ given by $e_X(x)(a)=ax$ is injective: evaluation at $1\in \mathcal{A}_N$ is a left inverse.
	
	Now consider $Z=\mathcal{ZW}_r^{N}(p,q)$ for $r\ge 1$. The morphism $e_Z$ represents a class in $ZW_r^{p,q}(GUZ)$, which must be in the image of $w_r$ because $E_r(GUZ)=0$. This means that there exists $f:\mathcal{BW}_r^N(p,q-1)\to GUZ$ such that $e_Z=f\circ\iota_{r}^N(p,q)$. Since $e_Z$ is injective, so is $\iota_r^N(p,q)$.
\end{proof}

\begin{corollary}\label{ses}
	For $r\ge 1$, the sequence \labelcref{es} is part of a short exact sequence:
	\[
	0\to \mathcal{ZW}_r^N(p,q)\xrightarrow{\iota_r^N(p,q)} \mathcal{BW}_r^N(p,q-1)\to \mathcal{ZW}_r^N(p+r-1, q+r-2)\to 0.
	\]
\end{corollary}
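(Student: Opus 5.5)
The sequence \labelcref{es} is already known to be exact at $\mathcal{BW}_r^N(p,q-1)$ and at $\mathcal{ZW}_r^N(p+r-1,q+r-2)$ by \cite[Lemma 3.20]{fu2020model}. The only missing piece is exactness on the left, namely that $\iota_r^N(p,q)$ is a monomorphism for $r\ge 1$. I therefore plan to deduce the corollary directly from \Cref{monomorphism}.

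The key step is to observe that $\iota_r^N(p,q)$ is itself a cofibration in a suitable model structure. For $r\ge 1$, take the model structure $(N\mc)_{r-1,s}$ for any $s\ge 0$, say $s=0$. By part (3)(a) of the model-structure theorem, its generating cofibrations include $\iota_{(r-1)+1}^N(p,q)=\iota_r^N(p,q)$. Every generating cofibration is a cofibration, so \Cref{monomorphism} shows that $\iota_r^N(p,q)$ is a monomorphism in $N\mc$.

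Alternatively, the proof of \Cref{monomorphism} already shows injectivity of $\iota_r^N(p,q)$ for every $r\ge1$ directly. There $e_Z$ factors through $\iota_r^N(p,q)$ because $'E_r(GUZ)=0$, and $e_Z$ is injective. Either route suffices.

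Finally, I will splice this injectivity together with the exactness of \labelcref{es}. Since kernels in $N\mc$ are computed bidegreewise, this gives a short exact sequence of $N$-multicomplexes. There is no real obstacle here. The only point to check is the index bookkeeping: the case $r\ge1$ corresponds to the page $r-1\ge0$ in the model structure, so the hypothesis $r\ge 1$ is exactly what is needed.
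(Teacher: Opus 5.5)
Your proposal is correct and follows the paper's proof. The paper likewise gets injectivity of $\iota_r^N(p,q)$ for $r\ge 1$ from \Cref{monomorphism}, whose proof establishes exactly this (equivalently, apply its statement to $(N\mc)_{r-1,0}$, where $\iota_r^N(p,q)$ is a generating cofibration), and combines it with the exactness of \labelcref{es}.
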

\begin{proof}
	The previous proposition shows that $\iota_r^N(p,q)$ is injective.
\end{proof}

\begin{corollary}\label{exactness}
	Let $i: A\to B$ be a cofibration in $(N\mc)_{r,s}$ with $C=\coker(i)$, then the sequence
	\[
	0\to A\xrightarrow{i} B\to C\to 0
	\]
	is exact, and if $N\ge 3$ the induced sequences
	\[
	0\to \mathbf{q}_\pm A\xrightarrow{\mathbf{q}_\pm i}\mathbf{q}_\pm B\to \mathbf{q}_\pm C\to 0
	\]
	are exact. Here we assume $s\ge 1$ for $\mathbf{q}_+$ and assume $r\ge 1$ for $\mathbf{q}_-$.
\end{corollary}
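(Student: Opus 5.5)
The first assertion, exactness of $0\to A\to B\to C\to 0$, is immediate from \Cref{monomorphism}, since $i$ is a monomorphism and $C$ is its cokernel in the abelian category $N\mc$. For the second assertion, note that $\mathbf{q}_\pm$ is a left adjoint, hence right exact. The sequence $\mathbf{q}_\pm A\to\mathbf{q}_\pm B\to\mathbf{q}_\pm C\to 0$ is therefore exact, and the whole content is to show that $\mathbf{q}_\pm i$ is a monomorphism. By the involution (\Cref{involution-equivalence} together with $\mathbf{q}_-=(-)^{\mathfrak{o}_{N-1}}\mathbf{q}_+(-)^{\mathfrak{o}_N}$), the case of $\mathbf{q}_+$ with $s\ge1$ on $(N\mc)_{r,s}$ is equivalent to the case of $\mathbf{q}_-$ with $r\ge1$ on $(N\mc)_{s,r}$. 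So I will treat $\mathbf{q}_-$ with $r\ge1$, where \Cref{basechange}(2) gives explicit formulas.

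I reduce to generating cofibrations exactly as in the proof of \Cref{monomorphism}. Left adjoints preserve coproducts, pushouts, transfinite compositions and retracts. In $(N-1)\mc$, monomorphisms are stable under these operations because colimits are computed bidegreewise in an abelian category with exact filtered colimits. Hence $\mathbf{q}_-$ sends relative cell complexes built from a set $I$ of maps to monomorphisms, provided it sends each element of $I$ to a monomorphism. It therefore suffices to check the generating cofibrations of $(N\mc)_{r,s}$ with $r\ge1$.

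\textbf{The maps $0\to\mathcal{ZW}_i^N$ and $0\to(\mathcal{ZW}_j^N)^{\mathfrak{o}_N}$.} These are sent to maps out of $0$, which are trivially monic.

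\textbf{The map $\iota_{r+1}^N(p,q)$.} Here $r+1\ge2$, and \Cref{basechange}(2) shows that $\mathbf{q}_-\iota_{r+1}^N(p,q)$ is the block map with components $\iota_r^{N-1}(p',q')$ and $\mathrm{id}_C$. The first component is injective by \Cref{monomorphism} for $N-1$ (using $r\ge1$; note $N-1\ge2$, and the argument of \Cref{monomorphism} applies equally to bicomplexes). The second is an identity. So the map is monic.

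This is exactly where the hypothesis $r\ge1$ enters: for $r=0$ we would get $\mathbf{q}_-\iota_1^N=0$, which is not monic.

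\textbf{The map $(\iota_{s+1}^N(p,q))^{\mathfrak{o}_N}$.} This is the main obstacle. We have $\mathbf{q}_-\big((\iota_{s+1}^N)^{\mathfrak{o}_N}\big)=(\mathbf{q}_+\iota_{s+1}^N)^{\mathfrak{o}_{N-1}}=(\iota_{s+1}^{N-1})^{\mathfrak{o}_{N-1}}$ by \Cref{basechange}(1). Since $s+1\ge1$, this is injective by \Cref{monomorphism}, and the involution preserves monomorphisms. So no extra hypothesis on $s$ is needed here.

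In the $\mathbf{q}_+$ formulation the roles swap: $\mathbf{q}_+\iota_{r+1}^N=\iota_{r+1}^{N-1}$ is monic automatically, while $\mathbf{q}_+(\iota_{s+1}^N)^{\mathfrak{o}_N}=(\mathbf{q}_-\iota_{s+1}^N)^{\mathfrak{o}_{N-1}}$ needs $s\ge1$. This matches the stated hypotheses and completes the argument.
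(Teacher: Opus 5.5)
Your argument is correct, but it takes a more hands-on route than the paper. The paper's proof is two lines. Under the stated hypotheses, $(N\mc)_{r,s}$ is exactly the source of one of the Quillen adjunctions in the theorem at the end of Section 3: read it as $(N\mc)_{r,(s-1)+1}$ for $\mathbf{q}_+$ and as $(N\mc)_{(r-1)+1,s}$ for $\mathbf{q}_-$. So $\mathbf{q}_\pm i$ is a cofibration in $((N-1)\mc)_{r,s-1}$, resp.\ $((N-1)\mc)_{r-1,s}$. It is therefore a monomorphism by \Cref{monomorphism} applied to $N-1$, and right exactness finishes the proof.

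You instead rerun the cellular reduction of \Cref{monomorphism} for the functor $\mathbf{q}_-$ and test the generating cofibrations directly using \Cref{basechange}. In effect you reprove the relevant half of ``$\mathbf{q}_-$ is left Quillen'' from the left-adjoint side, via the images of the representing objects. The paper obtained the Quillen adjunction from the right-adjoint side, via \Cref{page-comparison}. The paper's route is shorter and reuses structure already established. Yours is self-contained at the level of generators and makes transparent why the hypothesis is needed. Your observation that $\mathbf{q}_-\iota_1^N(p,q)=0$ on the nonzero object $\mathcal{ZW}_0^{N-1}(p',q')$ shows that the second assertion actually fails for $r=0$, so the hypothesis is sharp.

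Both arguments rest on the same essential input: injectivity of $\iota_k^{N-1}$ for $k\ge 1$, including the bicomplex case $N-1=2$.
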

\begin{proof}
	By \Cref{monomorphism}, $i$ is injective and therefore the sequence
	\[
	0\to A\xrightarrow{i} B\to C\to 0
	\]
	is exact. On the one hand, $\mathbf{q}_\pm$ are left Quillen and $i$ is a cofibration, so $\mathbf{q}_\pm i$ are cofibrations and thus injective by \Cref{monomorphism}. On the other hand, $\mathbf{q}_\pm$ are left adjoints and therefore right exact. So $\mathbf{q}_\pm$ turn the short exact sequence above into short exact sequences.
\end{proof}

\begin{lemma}\label{cell-attachment}
	Let $f:\mathcal{ZW}_2^N(p,q)\to X$ be a morphism of $N$-multicomplexes, and form the pushout
	\[
	\begin{tikzcd}
		\mathcal{ZW}_2^N(p,q)\ar[r,"f"]\ar[d,"{\iota_2^N(p,q)}"'] & X\ar[d,"i"]\\
		\mathcal{BW}_2^N(p,q-1)\ar[r] & Y
	\end{tikzcd}
	\]
	Then $i$ is injective with $Z=\coker(i)\cong \mathcal{ZW}_2^N(p+1,q)$. Moreover, the  induced sequence
	\[
	0\to {'E}_1(X)\xrightarrow{i_*}  {'E}_1(Y)\to {'E}_1(Z)\to 0
	\]
	is exact.
\end{lemma}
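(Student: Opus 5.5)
Injectivity of $i$ and the identification of $Z$ are formal. By \Cref{ses} with $r=2$, $\iota_2^N(p,q)$ is injective with cokernel $\mathcal{ZW}_2^N(p+1,q)$. In the abelian category $N\mc$, a pushout of a monomorphism is a monomorphism, because pushouts are computed bidegreewise on $R$-modules. The induced map on cokernels $\coker(\iota_2^N(p,q))\to\coker(i)$ is an isomorphism. Hence $i$ is injective and $Z\cong\mathcal{ZW}_2^N(p+1,q)$.

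For the statement about ${'E}_1$, first use that $0\to X\to Y\to Z\to 0$ is short exact. Since ${'E}_1$ is the homology of $\delta_0$ (as $d_0=\delta_0$), we obtain a long exact sequence
\[
\cdots\to {'E}_1^{m,n}(X)\xrightarrow{i_*}{'E}_1^{m,n}(Y)\to {'E}_1^{m,n}(Z)\xrightarrow{\partial}{'E}_1^{m,n+1}(X)\to\cdots
\]
The claim is therefore equivalent to the vanishing of the connecting map $\partial$. By \Cref{computation}, ${'E}_1(Z)\cong R^{p+1,q}\oplus R^{p-1,q}$. By \Cref{generator}, it is generated by the tautological class $\xi_2$ and ${'d}_2\xi_2$. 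It is not generated by $\xi_1$-type data, so I will describe these generators concretely through explicit $\delta_0$-cycles.

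The plan is to show that each generator of ${'E}_1(Z)$ lifts to a $\delta_0$-cycle in $Y$; then $\partial=0$. I would do this by first carrying out the computation in the universal case $X=\mathcal{ZW}_2^N(p,q)$, $f=\mathrm{id}$, so that $Y=\mathcal{BW}_2^N(p,q-1)$. There I check that
\[
{'E}_1(\mathcal{BW}_2^N(p,q-1))\to{'E}_1(\mathcal{ZW}_2^N(p+1,q))
\]
is surjective. This can be done via the representing description. $\mathcal{BW}_2$ is generated by elements $b_0,a,c_0$ corresponding to
\[
BW_2^{p,q-1}=ZW_1\oplus X\oplus ZW_1,
\]
and the quotient map to $\mathcal{ZW}_2^N(p+1,q)$ comes from the kernel identification in \cite[Lemma 3.8]{fu2020model}. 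In this description the witness-cycle generator $(y_0,y_1)$ of $\mathcal{ZW}_2(p+1,q)$ corresponds to the $b$-generator, and the $b$-component of $BW_2$ is itself a $ZW_1$, i.e. a $\delta_0$-cycle. That gives a $\delta_0$-cycle lift of $\xi_2$. For the class ${'d}_2\xi_2$, which is represented by $\delta_1 y_1$-type elements, I expect an analogous explicit $\delta_0$-cycle lift, built from the $c$-generator (again a $ZW_1$).

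For general $f$, the map of pushout squares from the universal case to the given one induces a morphism of short exact sequences that is the identity on $Z$. By naturality of $\partial$, the connecting map for $X\to Y\to Z$ factors through the connecting map of the universal sequence, which vanishes. So it suffices to verify that in the universal case
\[
0\to{'E}_1(\mathcal{ZW}_2)\to{'E}_1(\mathcal{BW}_2)\to{'E}_1(\mathcal{ZW}_2(p+1,q))\to0
\]
is exact, i.e. that $\partial=0$ there. The main obstacle is this universal computation. I would try a rank count first: compute ${'E}_1(\mathcal{BW}_2^N(p,q-1))$ directly, expecting it to be free of rank $4$ because $\mathcal{BW}_2$ is built from the acyclic-looking summand $\mathcal{ZW}_0$ together with two copies of $\mathcal{ZW}_1$. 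Compare this with the ranks $2+2$ from \Cref{computation}, and check that the long exact sequence forces $\partial=0$ on the free generators, using the bidegrees $(p,q),(p-2,q-1),(p+1,q),(p-1,q)$. If the ranks do not match cleanly over general $R$, I would instead exhibit the two lifts explicitly as above.
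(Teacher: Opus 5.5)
Your outline matches the paper's: injectivity of $i$ and $Z\cong\mathcal{ZW}_2^N(p+1,q)$ from \Cref{ses} and pushout-stability of monomorphisms, then the long exact sequence in $\delta_0$-homology, then reduction by naturality of $\partial$ to the universal case $X=\mathcal{ZW}_2^N(p,q)$, $Y=\mathcal{BW}_2^N(p,q-1)$. The gap is that the one remaining step, $\partial=0$ in the universal case, is never established. You give a $\delta_0$-cycle lift of $\xi_2$, only say you expect one for ${'d}_2\xi_2$, and leave the rank count as a plan.

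Your input for that step is also wrong. \Cref{computation} with $r=2$ at $(p+1,q)$ gives ${'E}_1(Z)\cong R^{p+1,q}\oplus R^{p-1,q-1}$, not $R^{p+1,q}\oplus R^{p-1,q}$. With your bidegrees the rank count you describe would not balance. Indeed ${'E}_1(\mathcal{BW}_2^N(p,q-1))\cong{'E}_1(\mathcal{ZW}_1^N(p+1,q))\oplus{'E}_1(\mathcal{ZW}_1^N(p-1,q-1))$ is concentrated in bidegrees $(p+1,q)$, $(p,q)$, $(p-1,q-1)$, $(p-2,q-1)$, and none of these is $(p-1,q)$.

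The missing observation is that you need neither ${'E}_1(Y)$ nor explicit lifts. As your own long exact sequence shows, $\partial$ has bidegree $(0,1)$. In the universal case ${'E}_1(X)\cong R^{p,q}\oplus R^{p-2,q-1}$. So $\partial$ would have to send the generators of ${'E}_1(Z)$, in bidegrees $(p+1,q)$ and $(p-1,q-1)$, into ${'E}_1^{p+1,q+1}(X)$ and ${'E}_1^{p-1,q}(X)$, and both of these vanish. This degree argument is exactly how the paper finishes; it would even have worked with your incorrect bidegree.

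Your explicit route can also be completed. Up to sign, the quotient map sends the $c$-generator to $\delta_2y_0-\delta_1y_1$. This is a $\delta_0$-cycle representing ${'d}_2\xi_2$ in ${'E}_1(Z)$, since ${'d}_1=0$ there. It is simply more work than the situation requires.
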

\begin{proof}
	The first assertion follows from \Cref{monomorphism} and \Cref{ses}. Recall that $'E_1=H(-,\delta_0)$, so the desired short exact sequence will follow from the vanishing of the connecting homomorphism in the homology long exact sequence. For this, it suffices to prove the connecting homomorphism in the universal case $X=\mathcal{ZW}_2^N(p,q)$ and $i=\iota_2^N(p,q)$ vanishes because the general connecting homomorphism is the universal one followed by ${'E}_1(f)$. By \Cref{computation}, in the universal case $'E_1(X)=R^{p,q}\oplus R^{p-2,q-1}$ and ${'E}_1(Z)=R^{p+1,q}\oplus R^{p-1,q-1}$ but the connecting homomorphism has bidegree $(0,1)$ and thus vanishes for degree reasons.
\end{proof}

\begin{lemma}\label{unit-computation} Assume $N\ge 3$.
	Let $X$ be $\mathcal{ZW}_2^N(p,q)$ or $\mathcal{ZW}_1^N(p,q)^{\mathfrak{o}_N}$, and let $\eta_X: X\to \mathbf{j}_-\mathbf{q}_-X$ be the unit of the adjunction $\mathbf{q}_-\dashv\mathbf{j}_-$. Then $\eta_X$ is a weak equivalence in $(N\mc)_{1,0}$.
\end{lemma}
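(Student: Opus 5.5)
Weak equivalences in $(N\mc)_{1,0}$ are the maps inducing isomorphisms on ${}'E_2$ and ${}''E_1$. So for each of the two choices of $X$ we compute ${}'E_2$ and ${}''E_1$ of both $X$ and $\mathbf j_-\mathbf q_-X$, and check that $\eta_X$ induces isomorphisms. The target is handled by \Cref{basechange}(2) and \Cref{page-comparison}, the source by \Cref{computation}.

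\emph{Case $X=\mathcal{ZW}_2^N(p,q)$.} By \Cref{computation}, ${}'E_3(X)=0$ and ${}''E_1(X)=0$. By \Cref{basechange}(2) with $r=2$ we have $\mathbf q_-X\cong \mathcal{ZW}_1^{N-1}(p',q')\oplus \mathcal{ZW}_0^{N-1}(p'-1,q'-1)$. For the second spectral sequence, \Cref{page-comparison} gives ${}''E_s(\mathbf j_-Y)\cong{}''E_s(Y)$. Hence ${}''E_1(\mathbf j_-\mathbf q_-X)$ is ${}''E_1$ of $\mathcal{ZW}_1^{N-1}\oplus\mathcal{ZW}_0^{N-1}$, which vanishes by \Cref{computation}. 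For $\mathcal{ZW}_0^{N-1}=\mathcal{A}_{N-1}$, the vanishing follows from the contracting-homotopy argument of \Cref{contractible} applied to the involuted algebra. For the first spectral sequence, ${}'E_2(\mathbf j_-\mathbf q_-X)\cong {}'E_1(\mathbf q_-X)$, which by \Cref{computation} is $0$ on both summands. So both pages vanish on source and target, and $\eta_X$ is an isomorphism on ${}'E_3$ and ${}''E_1$ (the latter being acyclic, so ${}''E_2=0$ too). Strictly, we need a quasi-isomorphism on ${}'E_2$; since both ${}'E_2$ pages are already $0$, this holds.

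Here I must double-check that ${}'E_2(X)\ne 0$ does not spoil this: by \Cref{computation}, ${}'E_2(X)\cong R\oplus R$ while ${}'E_2(\mathbf j_-\mathbf q_-X)=0$. That is still fine, because the model structure only requires a quasi-isomorphism on ${}'E_r$ with $r=1$, i.e.\ an isomorphism on ${}'E_2$ and ${}''E_1$. This step is the main obstacle: ${}'E_2(X)=R\oplus R$ must then match ${}'E_2(\mathbf j_-\mathbf q_-X)={}'E_1(\mathbf q_-X)=H(\mathcal{ZW}_1^{N-1}(p',q'),\delta_0)\oplus H(\mathcal{ZW}_0^{N-1},\delta_0)$. \Cref{computation} gives $R^{p',q'}\oplus R^{p'-1,q'}$ for the first summand and $0$ for the free summand. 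So the plan is to show that $\eta_X$ sends the generator $\xi_2$ of \Cref{generator} to the generator $\xi_1$ of ${}'E_1(\mathcal{ZW}^{N-1}_1(p',q'))$. I will do this by tracing the tautological element through the adjunction: $\eta_X$ corresponds to $\mathrm{id}_{\mathbf q_-X}$, so the class of $\eta_X(\mathrm{id}_X)$ in ${}'E_2(\mathbf j_-\mathbf q_-X)$ is the image of the tautological witness cycle under the isomorphism of \Cref{witness-comparison}(2), namely $(\xi_1,\text{something in }C)$. Since $\eta_X$ commutes with page differentials (up to the sign of \Cref{page-comparison}), the other generator ${}'d_2\xi_2$ maps to $\pm{}'d_1\xi_1$, also a generator. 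The map is then an isomorphism on ${}'E_2$, hence a quasi-isomorphism. The bidegrees need checking: ${}'E_2^{p,q}(\mathbf j_-Y)={}'E_1^{p',q'}(Y)$ under $\tau$, consistent with the degrees above.

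\emph{Case $X=\mathcal{ZW}_1^N(p,q)^{\mathfrak o_N}$.} By \Cref{computation} and the involution, ${}'E_i(X)=0$ for $i\ge1$, and ${}''E_1(X)\cong R\oplus R$ (via $\mathfrak o_N$), generated by the tautological class and its ${}''d_1$. We have $\mathbf q_-X=(\mathbf q_+\mathcal{ZW}_1^N(p,q))^{\mathfrak o_{N-1}}\cong \mathcal{ZW}_1^{N-1}(p,q)^{\mathfrak o_{N-1}}$ by \Cref{basechange}(1). Then ${}'E_2(\mathbf j_-\mathbf q_-X)\cong{}'E_1(\mathbf q_-X)={}''E_1(\mathcal{ZW}_1^{N-1})=0$, matching ${}'E_2(X)=0$. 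Also ${}''E_1(\mathbf j_-\mathbf q_-X)\cong{}''E_1(\mathbf q_-X)={}'E_1(\mathcal{ZW}_1^{N-1}(p,q))\cong R\oplus R$. The same tautological-class argument, transported through the involution, shows that $\eta_X$ maps generator to generator, compatibly with ${}''d_1$. Hence $\eta_X$ is an isomorphism on ${}''E_1$, and thus on ${}''E_2$.
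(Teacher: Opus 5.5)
Your eventual argument is the paper's. In the first case you send the tautological class $\xi_2$ of ${}'E_2(\mathcal{ZW}_2^N(p,q))\cong R^{p,q}\oplus R^{p-2,q-1}$ to the tautological class $\xi_1$ of ${}'E_1(\mathcal{ZW}_1^{N-1}(p',q'))\cong R^{p',q'}\oplus R^{p'-1,q'}$, with the free summand $C$ contributing nothing. You then obtain the second generator from \Cref{generator} and the compatibility of differentials up to sign in \Cref{page-comparison}. In the second case both ${}'E_2$ pages vanish, and the ${}''E_1$ statement is transported by the involutions to the $\mathbf q_+\dashv\mathbf j_+$ unit on ${}'E_1(\mathcal{ZW}_1^N(p,q))$, exactly as in the paper.

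However, the first case as written contains false statements that contradict your later, correct computation, and they must be deleted.
\begin{enumerate}[(1)]
\item ${}'E_1(\mathbf q_-X)$ is not zero on both summands. The $\mathcal{ZW}_1^{N-1}(p',q')$ summand has ${}'E_1\cong R\oplus R$ by \Cref{computation} with $i=r=1$, so ${}'E_2(\mathbf j_-\mathbf q_-X)\neq 0$.
\item The requirement in $(N\mc)_{1,0}$ is an isomorphism on ${}'E_2$, i.e.\ a quasi-isomorphism on ${}'E_1$. It is neither a quasi-isomorphism on ${}'E_2$ nor an isomorphism on ${}'E_3$.
\item Suppose ${}'E_2(\mathbf j_-\mathbf q_-X)$ really were $0$ while ${}'E_2(X)\cong R\oplus R$. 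That would not be ``still fine''; it would show that $\eta_X$ is \emph{not} a weak equivalence.
\end{enumerate}

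A small clarification: the ``something in $C$'' you mention is a coordinate at the witness-cycle level. It lies in the extra summand $Y^{p'-1,q'-1}$ of $ZW_2^{p,q}(\mathbf j_-Y)$, which dies in ${}'E_2$ because $w_2^{\mathbf j_-Y}$ restricts to the identity onto that summand (\Cref{witness-comparison}). At the level of classes, the image is therefore $(\xi_1,0)$.

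With these corrections, what remains is a correct proof along the same lines as the paper.
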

\begin{proof} We show that $\eta_X$ induces isomorphisms on $'E_2$ and ${''E}_1$.
	
	For $X=\mathcal{ZW}_2^N(p,q)$, from \Cref{basechange} we have $$\mathbf{q}_-X\cong \mathcal{ZW}_1^{N-1}(p',q')\oplus \mathcal{ZW}_0^{N-1}(p'-1, q'-1)$$ where $(p',q')=\tau(p,q)$. Let $s: \mathcal{ZW}_1^{N-1}(p',q')\to \mathbf{q}_-X$ be the inclusion of the first factor. We remark that the second factor does not contribute to $'E_1$ by \Cref{computation}. By construction, the diagram
	\[
	\begin{tikzcd}
		\Hom_{N\mc}(\mathcal{ZW}_2^N(p,q),X)\ar[d,equal]\ar[rr,"f\mapsto \mathbf{q}_-f\circ s"] & &\Hom_{(N-1)\mc}(\mathcal{ZW}_1^{N-1}(p',q'), \mathbf{q_-}X)\ar[d,equal]\\
		ZW_2^{p,q}(X) \ar[rr,"(\eta_X)_*"]& &ZW_1^{p',q'}(\mathbf{q_-}X)
	\end{tikzcd}
	\]
	commutes, and therefore the map
	\[
	(\eta_X)_*: {'E}_2^{p,q}(X)\to {'E}_2^{p,q}(\mathbf{j}_-\mathbf{q}_-X)\cong {'E}_1^{p',q'}(\mathbf{q}_-X)={'E}_1^{p',q'}(\mathcal{ZW}_1^{N-1}(p',q'))
	\]
	takes the tautological generator to the tautological generator (recall \Cref{generator}). Thus by \Cref{generator} and \Cref{page-comparison}, $\eta_X$ induces an isomorphism on $'E_2$. As for $''E_1$, both
	\[
	''E_1(X),\  ''E_1(\mathbf{j}_-\mathbf{q}_-X)\cong {''E_1}(\mathbf{q}_-X)
	\]
	vanish by \Cref{computation}. This completes the proof for $X=\mathcal{ZW}_2^N(p,q)$.
	
	The proof for $X=\mathcal{ZW}_1^N(p,q)^{\mathfrak{o}_N}$ is simpler. On the one hand, ignoring degree shifts, both
	\[
	'E_2(X),\  'E_2(\mathbf{j}_-\mathbf{q}_-X)\cong {'E_1}(\mathbf{q}_-X)
	\]
	vanish. On the other hand, $\eta_X$ is an $''E_1$ isomorphism if and only if the unit of the adjunction $\mathbf{q}_+\dashv\mathbf{j}_+$, $\mathcal{ZW}_1^N(p,q)\to \mathbf{j}_+\mathbf{q}_+\mathcal{ZW}_1^N(p,q)$, is an $'E_1$-isomorphism. But the induced map
	\[
	'E_1(\mathcal{ZW}_1^N(p,q))\to {'E}_1(\mathbf{j}_+\mathbf{q}_+\mathcal{ZW}_1^N(p,q))={'E}_1(\mathcal{ZW}_1^{N-1}(p,q))
	\]
	clearly identifies the tautological classes. 
\end{proof}

\begin{proposition}\label{model-equivalence}
	For $N\ge 3$, the following Quillen adjunctions are Quillen equivalences:
	\begin{align*}
	& \mathbf{q}_+: (N\mc)_{0,1} \leftrightarrows \left((N-1)\mc\right)_{0,0} : \mathbf{j}_+,\\
	& \mathbf{q}_-: (N\mc)_{1,0} \leftrightarrows \left((N-1)\mc\right)_{0,0} : \mathbf{j}_-.
\end{align*}
\end{proposition}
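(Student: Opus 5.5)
By Remark~\ref{involution-equivalence} and the definitions $\mathbf q_-=(-)^{\mathfrak o_{N-1}}\circ\mathbf q_+\circ(-)^{\mathfrak o_N}$, $\mathbf j_-=(-)^{\mathfrak o_N}\circ\mathbf j_+\circ(-)^{\mathfrak o_{N-1}}$, the first adjunction is isomorphic to the second after conjugating by the model-category isomorphisms $(N\mc)_{0,1}\simeq(N\mc)_{1,0}$ and $((N-1)\mc)_{0,0}\simeq((N-1)\mc)_{0,0}$. So it suffices to treat $\mathbf q_-\dashv\mathbf j_-$ from $(N\mc)_{1,0}$. We use the standard criterion: a Quillen adjunction is a Quillen equivalence if the right adjoint reflects weak equivalences between fibrant objects and, for every cofibrant $X$, the derived unit $X\to\mathbf j_-R\mathbf q_-X$ is a weak equivalence. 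Since $\mathbf j_-$ preserves all weak equivalences (Proposition~\ref{page-comparison}), the derived unit is a weak equivalence iff $\eta_X:X\to\mathbf j_-\mathbf q_-X$ is. Reflection also follows from Proposition~\ref{page-comparison}: $f$ is an $'E_1$- and $''E_1$-isomorphism iff $\mathbf j_-f$ is an $'E_2$- and $''E_1$-isomorphism.

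It remains to show that $\eta_X$ is a weak equivalence in $(N\mc)_{1,0}$ for every cofibrant $X$. The generating cofibrations of $(N\mc)_{1,0}$ are $\iota_2^N(p,q)$ and $(\iota_1^N(p,q))^{\mathfrak o_N}$ (the family $0\to\mathcal{ZW}_i$ is empty here). By retract closure, we reduce to cell complexes $0=X_0\to X_1\to\cdots\to X_\lambda$, each step a pushout of a generating cofibration. The induction is on $\lambda$. At a successor step $X\to Y$ with cokernel $Z$, Corollary~\ref{exactness} (with $r=1$ for $\mathbf q_-$) gives short exact sequences $0\to X\to Y\to Z\to0$ and $0\to\mathbf q_-X\to\mathbf q_-Y\to\mathbf q_-Z\to0$, and $\mathbf j_-$ is exact. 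The units form a map of short exact sequences. By Corollary~\ref{ses}, $Z$ is $\mathcal{ZW}_2^N(p+1,q)$ or $\mathcal{ZW}_1^N(p,q)^{\mathfrak o_N}$ up to shift, so $\eta_Z$ is a weak equivalence by Lemma~\ref{unit-computation}. Limit steps follow because $\mathbf q_-,\mathbf j_-$ commute with filtered colimits of monomorphisms, and $'E_r,''E_s$ commute with filtered colimits, since $ZW_r$ and $BW_r$ are finite limits and filtered colimits of $R$-modules are exact.

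\textbf{Main obstacle.} Passing weak equivalences through the short exact sequences at successor steps is the hard part, since pages beyond $E_1$ do not give long exact sequences in general.

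\emph{Second spectral sequence.} For $''E_1=H(-,\delta_{N-1})$, and correspondingly $''E_1(\mathbf j_-\mathbf q_-\,\cdot)\cong{}''E_1(\mathbf q_-\,\cdot)$, we do have long exact sequences. The five lemma then handles $''E_1$ directly.

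\emph{First spectral sequence.} Here I would work at the level of $'E_1$ with its differential $d_1$, viewing $'E_2$ as the homology of the complex $('E_1,d_1)$. Source and target then correspond under Proposition~\ref{page-comparison} to $('E_1,d_1)$ of $X$ and $('E_0,d_0)$ of $\mathbf q_-X$. For $\mathbf q_-$ the needed sequence is $0\to{}'E_0(\mathbf q_-X)\to{}'E_0(\mathbf q_-Y)\to{}'E_0(\mathbf q_-Z)\to0$, which is exact because it is just the underlying short exact sequence. For the source I would use Lemma~\ref{cell-attachment} when the attached cell is $\iota_2^N$, which gives a short exact sequence of $d_1$-complexes on $'E_1$. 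When the cell is $(\iota_1^N)^{\mathfrak o_N}$, I would expect $'E_1$ of the cokernel to vanish, and would check that $'E_1$ is unchanged. In both cases the long exact sequence in $d_1$-homology and the five lemma give the $'E_2$-isomorphism for $\eta_Y$.

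The delicate point is checking that the connecting map vanishes, or remains compatible, in the $(\iota_1)^{\mathfrak o_N}$ case. I would handle it by the same universal-case degree argument used in the proof of Lemma~\ref{cell-attachment}.
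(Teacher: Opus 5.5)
Your proposal is correct and follows essentially the same route as the paper. The steps match: reduce to $\mathbf q_-\dashv\mathbf j_-$ by the involutions, use that $\mathbf j_-$ reflects weak equivalences, then run a cellular induction comparing the short exact rows of $('E_1,d_1)$ (via Lemma~\ref{cell-attachment}) and of $''E_0$ with those for $\mathbf q_-$, using Lemma~\ref{unit-computation} on the cells.

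The differences are minor. You pass from the derived unit to the ordinary unit because $\mathbf j_-$ preserves weak equivalences, whereas the paper notes that every object of $((N-1)\mc)_{0,0}$ is fibrant; both work. Your ``delicate point'' is vacuous: for an $(\iota_1^N)^{\mathfrak o_N}$-cell you have $'E_1(Z)\cong{}''E_1(\mathcal{ZW}_1^N)=0$ by Lemma~\ref{computation}, so there is no connecting map to control and no degree argument is needed.
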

\begin{proof}
	We prove the assertion for $\mathbf{q}_-\dashv\mathbf{j}_-$, the other follows by involutions. Now on the one hand it follows from \Cref{page-comparison} that $\mathbf{j}_-$ reflects weak equivalences. On the other hand every object of $((N-1)\mc)_{0,0}$ is fibrant, so the ordinary unit $\eta$ is the derived unit. Therefore by \cite[Corollary 1.3.16(c)]{hovey2007model} it suffices to prove that
\[
\eta_X: X\to \mathbf{j}_-\mathbf{q}_-X
\]
is a weak equivalence for every cofibrant object $X$.
	
	Let $I=\{\iota_2^N(p,q),\iota_1^N(p,q)^{\mathfrak{o}_N}\}_{p,q\in \ZZ}$ be the set of generating cofibrations of $(N\mc)_{1,0}$. Let $i: X\to Y$ be a single $I$-cell attachment, that is $Y$ is obtained from $X$ by pushout along a generating cofibration. Then $i$ is injective by \Cref{monomorphism} and $Z=\coker(i)$ is of the form $\mathcal{ZW}_2^{N}(p,q)$ or $\mathcal{ZW}_1^N(p,q)^{\mathfrak{o}_N}$. Consider the commutative diagram
	\[
	\begin{tikzcd}
		0\ar[r] & {'E}_1(X)\ar[r]\ar[d,"(\eta_X)_*"] & {'E}_1(Y)\ar[r]\ar[d,"(\eta_Y)_*"] & {'E}_1(Z)\ar[r]\ar[d,"(\eta_Z)_*"] & 0\\
		0\ar[r] & {'E}_0(\mathbf{q}_-X)\ar[r] & {'E}_0(\mathbf{q}_-Y)\ar[r] & {'E}_0(\mathbf{q}_-Z)\ar[r] & 0
	\end{tikzcd}
	\]
	The first row is exact by \Cref{cell-attachment} if $i$ is an $\iota_2^N$-cell; if $i$ is an $(\iota_1^N)^{\mathfrak{o}_N}$-cell then ${'E}_1(Z)=0$ and thus the first row is also exact. The second row is exact by \Cref{exactness}. The map $(\eta_Z)_*$ is a quasi-isomorphism by \Cref{unit-computation}. Therefore if $(\eta_X)_*$ is a quasi-isomorphism, then so is $(\eta_Y)_*$.
	
	Meanwhile consider the commutative diagram
	\[
	\begin{tikzcd}
		0\ar[r] & {''E}_0(X)\ar[r]\ar[d,"(\eta_X)_*"] & {''E}_0(Y)\ar[r]\ar[d,"(\eta_Y)_*"] & {''E}_0(Z)\ar[r]\ar[d,"(\eta_Z)_*"] & 0\\
		0\ar[r] & {''E}_0(\mathbf{q}_-X)\ar[r] & {''E}_0(\mathbf{q}_-Y)\ar[r] & {''E}_0(\mathbf{q}_-Z)\ar[r] & 0
	\end{tikzcd}
	\]
	Both rows are exact by \Cref{exactness}. The map $(\eta_Z)_*$ is a quasi-isomorphism by \Cref{unit-computation} and therefore again we have that if $(\eta_X)_*$ is a quasi-isomorphism, then so is $(\eta_Y)_*$.
	
	Therefore $\eta_Y$ is a weak equivalence whenever $\eta_X$ is. Since direct sums and filtered colimits are exact in $R$-modules and all the functors involved preserve them, a transfinite induction shows that $\eta_X$ is a weak equivalence for every $I$-cell complex $X$. Since every cofibrant object is a retract of an $I$-cell complex (\cite[Proposition 2.1.18]{hovey2007model}), and weak equivalences are closed under retracts, we conclude that $\eta_X: X\to \mathbf{j}_-\mathbf{q}_-X$ is a weak equivalence for every cofibrant object $X$.	This completes the proof.
\end{proof}

\begin{theorem}
	For $N\ge 3$ and $r,s\ge 0$, the following Quillen adjunctions are Quillen equivalences:
	\begin{align*}
	& \mathbf{q}_+: (N\mc)_{r,s+1} \leftrightarrows \left((N-1)\mc\right)_{r,s} : \mathbf{j}_+,\\
	& \mathbf{q}_-: (N\mc)_{r+1,s} \leftrightarrows \left((N-1)\mc\right)_{r,s} : \mathbf{j}_-.
\end{align*}
\end{theorem}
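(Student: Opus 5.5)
We reduce the general theorem to \Cref{model-equivalence} by a standard argument about changing the page indices of the model structures. By \Cref{involution-equivalence}, and since $\mathbf{q}_-$ and $\mathbf{j}_-$ are conjugates of $\mathbf{q}_+$ and $\mathbf{j}_+$ by the involutions, it is enough to treat $\mathbf{q}_-\dashv\mathbf{j}_-$ for all $r,s\ge 0$. We already know it is a Quillen adjunction $(N\mc)_{r+1,s}\rightleftarrows((N-1)\mc)_{r,s}$. By \Cref{page-comparison}, $\mathbf{j}_-$ shifts the first spectral sequence by one page and preserves the second, so $\mathbf{j}_-$ both preserves and reflects weak equivalences between these structures. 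By \cite[Corollary 1.3.16]{hovey2007model} it therefore suffices to show that for every cofibrant $X$ in $(N\mc)_{r+1,s}$ and a fibrant replacement $\mathbf{q}_-X\to R\mathbf{q}_-X$ in $((N-1)\mc)_{r,s}$, the derived unit $X\to \mathbf{j}_-R\mathbf{q}_-X$ is a weak equivalence.

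The key observation is monotonicity in the page indices. A weak equivalence in the $(a,b)$-structure is an isomorphism on ${}'E_{a+1}$ and ${}''E_{b+1}$, and hence also an isomorphism on all later pages. So the identity functors are right Quillen functors $(N\mc)_{a,b}\to(N\mc)_{a',b'}$ for $a\le a'$, $b\le b'$: fibrations in the larger-index structure require surjectivity on more pages, so the identity in this direction preserves fibrations and weak equivalences. Equivalently, the identity is left Quillen $(N\mc)_{a',b'}\to(N\mc)_{a,b}$. Consequently every cofibrant object of $(N\mc)_{r+1,s}$ is cofibrant in $(N\mc)_{1,0}$. Also, $\mathbf{q}_-X\to R\mathbf{q}_-X$ is a weak equivalence in $((N-1)\mc)_{r,s}$, hence in $((N-1)\mc)_{0,0}$ after pushing to later pages; I would double check this carefully, since "later pages" means larger indices and the direction matters. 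Actually $(0,0)$ has the smallest indices, so the comparison runs the other way: an $E_1$-isomorphism is an $E_{r+1}$-isomorphism, and $(0,0)$-weak equivalences are $(r,s)$-weak equivalences, not conversely. Thus we cannot push $R$ into $(0,0)$. Instead we argue directly. By \Cref{model-equivalence}, $\eta_X:X\to\mathbf{j}_-\mathbf{q}_-X$ is a $(1,0)$-weak equivalence for $X$ cofibrant in $(1,0)$, hence a $(r+1,s)$-weak equivalence. Next, $\mathbf{j}_-$ applied to the $(r,s)$-weak equivalence $\mathbf{q}_-X\to R\mathbf{q}_-X$ gives an $(r+1,s)$-weak equivalence by \Cref{page-comparison}. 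Composing the two, the derived unit is a weak equivalence in $(N\mc)_{r+1,s}$, which is what we need.

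The main obstacle is verifying that cofibrant objects of $(N\mc)_{r+1,s}$ are cofibrant in $(N\mc)_{1,0}$. I would check this directly on generators. The generating cofibrations $\iota^N_{r+2}$, $(\iota^N_{s+1})^{\mathfrak{o}_N}$, $0\to\mathcal{ZW}_i^N$ and $0\to(\mathcal{ZW}_j^N)^{\mathfrak{o}_N}$ must have the left lifting property against $(1,0)$-trivial fibrations. A $(1,0)$-trivial fibration is surjective on ${}'E_0,{}'E_1,{}''E_0$ and an isomorphism on ${}'E_2,{}''E_1$; from this one deduces surjectivity and isomorphism on all later pages, so it is an $(r+1,s)$-trivial fibration, provided surjectivity on the intermediate pages follows. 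I expect to prove that via the short exact sequence argument for $ZW$, using that surjectivity on $E_i$ together with isomorphism on $E_{i+1}$ propagates. This is the step I would scrutinise most; the alternative is to cite the corresponding comparison of model structures in \cite{cirici2025model,fu2020model}.
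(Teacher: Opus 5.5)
Your proof is essentially the paper's: reduce to $\mathbf{q}_-\dashv\mathbf{j}_-$, observe that every $(1,0)$-trivial fibration is an $(r+1,s)$-trivial fibration, so that $(r+1,s)$-cofibrant objects are $(1,0)$-cofibrant, and then promote the $(1,0)$-weak equivalence $\eta_X$ of \Cref{model-equivalence} to an $(r+1,s)$-weak equivalence; the fibrant replacement is harmless but unnecessary, since every object of $((N-1)\mc)_{r,s}$ is fibrant. Two corrections: your claim that the identity $(N\mc)_{a,b}\to(N\mc)_{a',b'}$ is right Quillen is false in general, because an $(a,b)$-fibration need not be surjective on ${}'E_{a+1},\dots,{}'E_{a'}$ (e.g.\ $\varepsilon_0:\mathcal{ZW}_0^N(p,q)\to R(p,q)$ is a $(0,0)$-fibration but is not surjective on ${}'E_1$), although only the trivial-fibration comparison is actually used; and the ``intermediate pages'' step you flag as the main obstacle is vacuous, because a $(1,0)$-fibration is by definition surjective on ${}'E_0,{}'E_1,{}''E_0$, and the weak-equivalence condition makes it an isomorphism on every remaining page.
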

\begin{proof}
	We prove the assertion for $\mathbf{q}_-\dashv\mathbf{j}_-$, the other follows by involutions. Similar to the proof of \Cref{model-equivalence}, it suffices to show that $\eta_X: X\to \mathbf{j}_-\mathbf{q}_-X$ is a weak equivalence for every cofibrant object $X$.
	
	Observe that every $(N\mc)_{r+1,s}$ cofibration is an $\left(N\mc\right)_{1,0}$ cofibration because every trivial $\left(N\mc\right)_{1,0}$ fibration is a trivial $(N\mc)_{r+1,s}$ fibration. This follows from the lifting-property characterization of cofibrations \cite[Lemma~1.1.10]{hovey2007model}. Indeed, an isomorphism of a spectral sequence at a certain page induces an isomorphism on every later page, and isomorphisms are in particular surjective.
	
	Now let $X$ be cofibrant in $(N\mc)_{r+1,s}$, then it is cofibrant in $\left(N\mc\right)_{1,0}$ and therefore \Cref{model-equivalence} implies that the unit $X\to \mathbf{j}_-\mathbf{q}_-X$ is a weak equivalence in $(N\mc)_{1,0}$ and consequently a weak equivalence in $(N\mc)_{r+1,s}$.
\end{proof}

\begin{corollary}
	There is a Quillen equivalence
	\[
	\mathbf{q}: (4\mc)_{1,1}\leftrightarrows (2\mc)_{0,0}: \mathbf{j},
	\]
	in which $\mathbf{q}=\mathbf{q}_+\mathbf{q}_-$ and $\mathbf{j}=\mathbf{j}_-\mathbf{j}_+$.
\end{corollary}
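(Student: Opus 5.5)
The plan is to compose the two Quillen equivalences from the preceding theorem and then check that the composite adjunction is the one in the statement. Quillen equivalences compose: if $F_1\dashv G_1$ and $F_2\dashv G_2$ are Quillen equivalences with $F_2$ defined on the target of $F_1$, then $F_2F_1\dashv G_1G_2$ is a Quillen adjunction. Its derived adjunction is the composite of the derived adjunctions, which are adjoint equivalences of homotopy categories. So the composite is again a Quillen equivalence, for instance by \cite[Corollary 1.3.15]{hovey2007model}, or directly by computing the derived functors of the composite.

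First, take $N=4$ and $(r,s)=(0,1)$ in the $\mathbf q_-$ part of the preceding theorem. This gives a Quillen equivalence
\[
\mathbf q_-:(4\mc)_{1,1}\rightleftarrows (3\mc)_{0,1}:\mathbf j_-.
\]
Next, take $N=3$ and $(r,s)=(0,0)$ in the $\mathbf q_+$ part. This gives a Quillen equivalence
\[
\mathbf q_+:(3\mc)_{0,1}\rightleftarrows (2\mc)_{0,0}:\mathbf j_+.
\]
The middle model structure $(3\mc)_{0,1}$ is the same in both, so the two adjunctions can be composed. The composite left adjoint is $\mathbf q_+\mathbf q_-$ and its right adjoint is $\mathbf j_-\mathbf j_+$, which are exactly the $\mathbf q$ and $\mathbf j$ of the statement.

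The only real check is the bookkeeping of indices: the target of the first equivalence must agree with the source of the second as a model category, not merely as a category. Here $\mathbf q_-$ lowers $r$ by one and $\mathbf q_+$ lowers $s$ by one, taking $(1,1)\mapsto(0,1)\mapsto(0,0)$, so they do agree. It is also worth noting that, up to the identification $\mathbf j_-\mathbf j_+$, this composite is the comparison considered in \cite{cirici2025model}, although that is not needed for the corollary itself.
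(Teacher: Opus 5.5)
Your proposal is correct and takes the same approach as the paper, which composes the $\mathbf q_-$ equivalence for $N=4$, $(r,s)=(0,1)$ with the $\mathbf q_+$ equivalence for $N=3$, $(r,s)=(0,0)$ through the common intermediate model structure $(3\mc)_{0,1}$. Your index bookkeeping is right, as is your identification of the composite adjoints $\mathbf q_+\mathbf q_-\dashv\mathbf j_-\mathbf j_+$.
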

\begin{proof}
	This follows by composing the two adjacent Quillen equivalences.
\end{proof}
We now compare this adjunction with the one defined in \cite[Section~4.1]{cirici2025model}. For a $4$-multicomplex $L$, set
\[
\overline L
=L/\bigl(\mathcal A_4\delta_0L+\mathcal A_4\delta_3L\bigr).
\]
Under the standard identification of extension of scalars along a quotient with the corresponding module quotient, $\mathbf q_-$ first imposes the relation $\delta_0=0$ and regrades by $\tau^{-1}(p,q)=(2p-q,p)$, while $\mathbf q_+$ then imposes the remaining relation $\delta_3=0$.  Hence
\[
(\mathbf q_+\mathbf q_-L)^{p,q}
\cong \overline L^{\,2p-q,p},
\]
with bicomplex differentials induced by $\delta_1^L$ and $\delta_2^L$. Conversely, for a bicomplex $M=(M,\delta_0,\delta_1)$, equations \eqref{j+} and \eqref{j-} give
\[
(\mathbf j_-\mathbf j_+M)^{p,q}=M^{q,\,2q-p},
\qquad
(\delta_0,\delta_1,\delta_2,\delta_3)
=(0,\delta_0^M,\delta_1^M,0).
\]
These are precisely the functors denoted by $\mathbf q$ and $\mathbf j$ in \cite[Section~4.1]{cirici2025model}. Thus, the adjunction above agrees canonically with the adjunction of \cite[Proposition~4.1.1]{cirici2025model}, and the corollary answers \cite[Remark~4.1.2]{cirici2025model}.

\begin{remark}
	The Quillen equivalence proved above completes the linear comparison between $4$-multicomplexes and bicomplexes for the spectral-sequence model structures considered here. A homotopy-theoretic comparison between almost complex and complex geometry, however, should also retain the multiplicative structure.
	
	Such a multiplicative refinement is related to a question of Sullivan, who asked whether rational homotopy theory provides further obstructions to the integrability of an almost complex structure \cite{angella2020almost}. On the bicomplex side, Stelzig's pluripotential homotopy theory provides a model-categorical framework for commutative algebra objects in bicomplexes \cite{stelzig2025pluripotential}. The underlying bicomplex model structure in Stelzig's theory is different from the one used here. Over $\mathbb C$, $(2\mc)_{0,0}$ is a right Bousfield localization of that model structure \cite{cirici2025model}. Nevertheless, the two classes of weak equivalences agree on locally bounded bicomplexes, and hence in particular on bounded bicomplexes \cite[Theorem~1.21]{stelzig2025pluripotential}.
	
	It is therefore natural to ask whether the category of commutative algebra objects in $4$-multicomplexes admits a suitable model structure and whether the resulting model category is Quillen equivalent to the corresponding model category in Stelzig's theory.
\end{remark}

\paragraph{Declaration on the use of artificial intelligence.}
OpenAI Codex (GPT-5 series, accessed July--August 2026) was used as an interactive tool to explore proof strategies, check intermediate algebraic arguments, identify potentially relevant references, improve the organization and language of the manuscript, and edit the \LaTeX{} source. The conceptual proof of \Cref{monomorphism} was suggested by OpenAI Codex (GPT-5.6 Sol) as a simplification of the author's earlier proof based on explicit computations. All suggestions were critically assessed and revised by the author.
\bibliographystyle{plain}
\bibliography{ref}

\Addresses
\end{document}